\theoremstyle{plain}                                       %
\newtheorem{thm}{\quad Theorem}                            %
\newtheorem{cor}[thm]{\quad Corollary}                     %
\newtheorem{prop}[thm]{\quad Proposition}                  %
\theoremstyle{definition}                                  %
\newtheorem{defi}[thm]{\quad Definition}                   %
\newtheorem{rmk}[thm]{\quad Remark}                        %
\newtheorem{ejem}[thm]{\quad Example}                      %
\newtheorem{alg}[thm]{\quad Algorithm}
\newcommand{\N}{{\Bbb N}}
\newcommand{\K}{{\Bbb K}}
\newcommand{\Z}{{\Bbb Z}}
\newcommand{\al}{{\alpha}}
\newcommand{\tT}{{\widetilde{T}}}
\newcommand{\tit}{{\widetilde{t}}}
\title{Recurrence relations for polynomial sequences via Riordan
matrices}
\author{Ana Luz\'{o}n* and Manuel A. Mor\'{o}n**}
\begin{document}
\maketitle

\address{*Departamento de Matem\'{a}tica Aplicada a los
Recursos Naturales. E.T. Superior de Ingenieros de Montes.
Universidad Polit\'{e}cnica de Madrid. 28040-Madrid,
SPAIN.}

\email{anamaria.luzon@upm.es}

\address{ **Departamento de Geometria y
Topologia. Facultad de Matematicas. Universidad Complutense de
Madrid. 28040- Madrid, SPAIN.}

 \email{mamoron@mat.ucm.es}

\vspace{1cm}

\begin{abstract}
We give recurrence relations for any family of generalized Appell
polynomials unifying so some known recurrences of many classical
sequences of polynomials. Our main tool to get our goal is the
Riordan group. We use the product of Riordan matrices to interpret
some relationships between different families of polynomials.
Moreover using the Hadamard product of series we get a general
recurrence relation for the polynomial sequences associated to the
so called generalized umbral calculus.
\end{abstract}

\vspace{1cm}

 Keywords: Recurrence relation, Riordan matrix, Generalized
Appell polynomials, Polynomials sequences of Riordan type, Umbral
calculus.

 MSC: 11B83, 68W30, 68R05, 05A40



\section{Introduction}

In this paper we obtain recurrence relations for a large class of
polynomials sequences. In fact, we get this for any family of
generalized Appell polynomials \cite{Boas-Buck}. Our main tool to
reach our goal is the so called Riordan group. \cite{Huang},
\cite{Rog78}, \cite{Sha91}, \cite{Spr94}.

This work is a natural consequence of our previous papers
\cite{2ways}, \cite{teo} and \cite{BanPas}, and then it can be
also considered as a consequence of the well-known Banach's Fixed
Point Theorem. We have also to say that some papers related to
this one have recently appeared in the literature \cite{He-H-S}
and \cite{WangWang} but our approach is different from that in
those papers because, our main result herein is the discovering of
a general recurrence relation for sequences of polynomials
associated, naturally, to Rirodan matrices. In particular we get a
characterization of Riordan arrays by rows.

The Riordan arrays are usually described by the generating
functions of their columns or, equivalently, by the induced action
on any power series. In fact a Riordan array can be defined as an
infinite matrix where the $k$-column is just the $k$-th term of a
geometric progression in $\K[[x]]$ with rate a power series of
order one. To get a proper Riordan array, eventually an element of
the Riordan group, \cite{Sha91}, we also impose that the first
term in the progression is a power series of order zero.

 In
\cite{BanPas} Section 3, the authors studied families of
polynomials associated to some particular Riordan arrays which
appeared in an iterative process to calculate the reciprocal of a
quadratic polynomial. There, we interpreted some products of
Riordan matrices as changes of variables in the associated
families of polynomials. This interpretation will be exploited
herein. Earlier in \cite{teo} the authors approached Pascal
triangle by a dynamical point of view using the Banach Fixed Point
Theorem. This approach is suitable to construct any Riordan array.
From this point of view it seems that our $T(f\mid g)$ notation
for a Riordan array is adequate, where
$\displaystyle{f=\sum_{n\geq0}f_nx^n, \ g=\sum_{n\geq0}g_nx^n}$
with $g_0\neq0$. The notation $T(f\mid g)$ represents the Riordan
array of first term $\displaystyle{\frac{f}{g}}$ and rate
$\displaystyle{\frac{x}{g}}$. So the Pascal triangle $P$ is just
$T(1\mid1-x)$. The action on a power series $h$ is given  by
$\displaystyle{T(f\mid
g)(h(x))=\frac{f(x)}{g(x)}h\left(\frac{x}{g(x)}\right)}$. The
mixture of the role of the parameters on the induced action
allowed us to get the following algorithm of construction for
$T(f\mid g)$ which is essential to get the results in this paper:
\begin{alg}\label{A:algo}
{\bf Construction of $T(f\mid g)$ }

$f=\sum_{n\geq0}f_nx^n$, $g=\sum_{n\geq0}g_nx^n$ with $g_0\neq0$,
$T(f\mid g)=(d_{n,j})$ with $n,j\geq0$,
$\frac{f}{g}=\sum_{n\geq0}d_nx^n$ and $d_{n,0}=d_n$.

\[
\left(
  \begin{array}{c|cccccccc}
f_0&&&&&&\\
f_1&d_{0,0}&d_{0,1}&d_{0,2}&d_{0,3}&d_{0,4}&\cdots\\
f_2&d_{1,0}&d_{1,1}&d_{1,2}&d_{1,3}&d_{1,4}&\cdots\\
f_3&d_{2,0}&d_{2,1}&d_{2,2}&d_{2,3}&d_{2,4}&\cdots\\
\vdots&\vdots&\vdots&\vdots&\vdots&\vdots&\cdots\\
f_{n+1}&d_{n,0}&d_{n,1}&d_{n,2}&d_{n,3}&d_{n,4}&\cdots\\
\vdots&\vdots&\vdots&\vdots&\vdots&\vdots&\ddots\\
 \end{array}
\right)
\]

with $d_{n,j}=0$ if $j>n$ and the following rules for $n\geq j$:

If $j>0$
\[
d_{n,j}=-\frac{g_1}{g_0}d_{n-1,j}-\frac{g_2}{g_0}d_{n-2,j}\cdots-\frac{g_{n}}{g_0}d_{0,j}+\frac{d_{n-1,j-1}}{g_0}
\]
and if $j=0$
\[
d_{n,0}=-\frac{g_1}{g_0}d_{n-1,0}-\frac{g_2}{g_0}d_{n-2,0}\cdots-\frac{g_{n}}{g_0}d_{0,0}+\frac{f_{n}}{g_0}
\]
Note that $d_{0,0}=\frac{f_0}{g_0}$. Then, in the $0$-column are
just the coefficients of $\frac{f}{g}$, i.e. $d_{n,0}=d_n$.
\end{alg}

The \textbf{main recurrence relation} obtained in this paper is
\begin{equation}\label{E:main}
  p_n(x)=\left(\frac{x-g_1}{g_0}\right)p_{n-1}(x)-\frac{g_2}{g_0}p_{n-2}(x)\cdots-\frac{g_{n}}{g_0}p_{0}(x)+\frac{f_{n}}{g_0}
\end{equation}

which is closely related to the algorithm. The coefficients of the
polynomials $(p_n(x))$ are, in fact, the entries in the rows of
the Riordan matrix $T(f\mid g )$.

Since our $T(f|g)$ notation for Riordan arrays is not the more
usual one, it is convenient to translate the above recurrence to
the notation $(d(t),h(t))$ with $h(0)\neq0$ and $d(0)\neq0$ used
in \cite{Huang,Spr94}. Since the rule of conversion is
$\displaystyle{(d(t),h(t))=T\left(\frac{d}{h}\Big|\frac{1}{h}\right)}$,
then the coefficients $(f_n)$ and $(g_n)$ in (\ref{E:main}) are
defined by  $\displaystyle{\frac{d}{h}=\sum_{n\geq0}f_nx^n}$ and
$\displaystyle{\frac{1}{h}=\sum_{n\geq0}g_nx^n}$. We think that
this recurrence is more difficult to predict from this last
notation.

The matrix notation used above in the algorithm will appear often
along this work so it deserves some explanation: really the matrix
$T(f\mid g)$ is what appears to the right of the vertical line.
The additional column to the left of the line, whose elements are
just the coefficients of series $f$, is needed for the
construction of the $0$-column of the matrix $T(f\mid g)$. Observe
that if we consider the whole matrix ignoring the line we get the
Riordan matrix $T(fg\mid g)$. This explanation is to avoid
repetitions along the text.

The paper is organized into four sections. In Section 2 we first
take the Pascal triangle as our first motivation. This example is
given here to explain and to motivate the interpretation of
Riordan matrices by rows. In fact, the known recurrence for
combinatorial numbers is the key to pass from the columns
interpretation to the rows interpretation and viceversa. In this
sense our Algorithm \ref{A:algo} is a huge generalization of the
rule $\displaystyle{\binom{n+1}{k}=\binom{n}{k}+\binom{n}{k-1}}$.
Later, we choose some classical sequences of polynomials:
Fibonacci, Pell and Morgan-Voyce polynomials to point out how the
structure of Riordan matrix is intrinsically in the known
recurrence relations for these families.  So we are going to
associated to any of these classical families a Riordan matrix
which determines completely the sequence of polynomials. Using the
product on the Riordan group, we recover easily some known
relationships between them.

In Section 3, we get our main recurrence relation (\ref{E:main})
as a direct consequence of Algorithm \ref{A:algo}. The theoretical
framework so constructed extends strongly and explains easily the
examples in Section 2 and some relationships between these
families. We also recover the generating function of a family of
polynomials by means of the action of $T(f\mid g)$ on a power
series. Later on, we obtain the usual umbral composition of
families of polynomials simply as a translation of the product of
matrices in the Riordan group.

In Section 4, we obtain some general recurrence relations  for any
family of generalized Appell polynomials, as a consequence of our
main recurrence (\ref{E:main}), and then of Algorithm
\ref{A:algo}. In this way we get into the so called generalized
Umbral Calculus, see \cite{Roman}, \cite{Roman-Rota}. We use the
Hadamard product of series to pass from the Riordan framework to
the more general framework of generalized Appell polynomials
because the sequences of Riordan type are those generalized Appell
sequences related to the geometric series
$\displaystyle{\frac{1}{1-x}}$, which is the neutral element for
the Hadamard product. We also relate in this section the Riordan
group with the so called delta-operators introduced by Rota et al.
\cite{Rota2}.


In this paper $\K$ always represents a field of characteristic
zero and $\N$ is the set of natural numbers including 0.

\section{Some classical examples as motivation}\label{S:ejem}

The best known description of Pascal triangle is by rows. With the
next first simple classical example we point out how to pass from
the column-description to the row-description. To do this for any
Riordan array is our main aim.

\begin{ejem}\textbf{Pascal's triangle.}
The starting point of the construction of Riordan arrays is the
Pascal triangle. From this point of view, Pascal triangle (by
columns) are the terms of the geometric progression, in $\K[[x]]$,
of first term $\displaystyle{\frac{1}{1-x}}$ and rate
$\displaystyle{\frac{x}{1-x}}$. So Pascal triangle $P$ is, by
columns,
$\displaystyle{P=\left(\frac{1}{1-x},\frac{x}{(1-x)^2},\frac{x^2}{(1-x)^3},\cdots,\frac{x^n}{(1-x)^{n+1}},\cdots,\right)}$.
Of course  it is not the way to introduce Pascal triangle, or
Tartaglia triangle, for the first time to students, because in
particular it requires some understanding of the abstraction of
infinity and order both on the \emph{number} of columns and on the
elements in any column. On the contrary, the non-null elements in
any row of Pascal triangle form a finite set of data. Usually
Pascal triangle is introduce by rows as the coefficients of the
sequence of polynomials $p_n(x)=(x+1)^n$. So, by rows, Pascal
triangle is
\[
P=\left(%
\begin{array}{c}
  (x+1)^0 \\
   (x+1)^1\\
  (x+1)^2 \\
  \vdots \\
  (x+1)^n \\
   \vdots\\
\end{array}%
\right)
\]
The Newton formula
$\displaystyle{(x+1)^n=\sum_{k=0}^n\binom{n}{k}x^k}$ allows us to
say that the $n$-th row of Pascal triangle is, by increasing order
of power of $x$,
$\displaystyle{\binom{n}{0},\binom{n}{1},\binom{n}{2},\cdots,\binom{n}{n}}$.
As it is well-known, $\displaystyle{\binom{n}{k}}$ represents the
number of subsets, with exactly $k$-elements, of a set with $n$
elements. Using algebra, $(x+1)^{n+1}=(x+1)(x+1)^n$, or
combinatorics, counting subsets, we see that
$\displaystyle{\binom{n+1}{k}=\binom{n}{k}+\binom{n}{k-1}}$. This
means that the Pascal triangle $P=(p_{n,k})_{n,k\in\N}$ follows
the rule: $p_{n,0}=1$ for every $n\in\N$, because
$\displaystyle{\binom{n}{0}=1}$ and $p_{n+1,k}=p_{n,k}+p_{n,k-1}$
for $1\leq k\leq n$. Using for example the combinatorial
interpretation of $\displaystyle{\binom{n}{k}}$ we see at once
that $\displaystyle{\binom{n}{k}=0}$ if $k>n$. 
What is the same, the Pascal triangle $(p_{n,k})_{n,k\in\N}$ is
totally determined by the following recurrence relation: If we
consider $\displaystyle{p_n(x)=\sum_{k=0}^np_{n,k}x^k}$ then
$p_0(x)=1$ and $p_{n+1}(x)=(x+1)p_{n}(x)$, $\forall\ n\geq0$. It
is  obvious because the above relations means that
$p_n(x)=(x+1)^n$.
\end{ejem}

\begin{ejem} \textbf{The Fibonacci polynomials. The Pell
polynomials. The Morgan-Voyce polynomials.} 
The Fibonacci polynomials are the polynomials defined by,
$F_0(x)=1$, $F_1(x)=x$ and
\[
F_{n}(x)=xF_{n-1}(x)+F_{n-2}(x) \ \text{ for } \ n\geq2
\]
We can unify the recurrence relation with the initial conditions
if we consider the sequence $(f_n)_{n\in\N}$, $(g_n)_{n\in\N}$
given by $g_0=1, \ g_1=0, \ g_2=-1, \ g_n=0, \ \forall n\geq3$ and
$f_0=1, \ f_n=0 \ \forall n\geq1$. Because if we write
\[
F_n(x)=\left(\frac{x-g_1}{g_0}\right)F_{n-1}(x)-\frac{g_2}{g_0}F_{n-2}(x)
-\cdots-\frac{g_n}{g_0}F_0(x)+\frac{f_n}{g_0}
\]
For $n\geq0$ we obtain both the recurrence relation and the
initial conditions. Note that the above recurrence for Fibonacci
polynomials fits the main recurrence relation (\ref{E:main}).

If we consider the Riordan matrix $T(f\mid g)$ for $f=1$ and
$g=1-x^2$, $\displaystyle{T\left(1|1-x^2\right)=(d_{n,k})}$ then
the polynomials associated to $T\left(1|1-x^2\right)$ are just the
Fibonacci polynomials. Using Algorithm \ref{A:algo}, the rule of
construction is: $d_{n,k}=d_{n-2,k}+d_{n-1,k-1}$, for $k>0$ and
$d_{n,0}=d_{n-2,0}$ for $n\geq2$ and $d_{0,0}=1$ and $d_{1,0}=0$.
The few first rows are:
\[
\left(
  \begin{array}{c|cccccccc}
1&&&&&&\\
0&1&&&&&\\
0&0&1&&&&\\
0&1&0&1&&&\\
0&0&2&0&1&&\\
0&1&0&3&0&1&\\
0&0&3&0&4&0&1\\
0&1&0&6&0&5&0&1\\
\vdots&\vdots&\vdots&\vdots&\vdots&\vdots&\vdots&\vdots&\ddots\\
  \end{array}
\right)
\]
Consequently the few first associated polynomials (look at the
rows of the matrix) are

\noindent $F_0(x)=1$\\
\noindent $F_1(x)=x$\\
\noindent $F_2(x)=1+x^2$\\
\noindent $F_3(x)=2x+x^3$\\
\noindent $F_4(x)=1+3x^2+x^4$\\
\noindent $F_5(x)=3x+4x^3+x^5$\\
\noindent $F_6(x)=1+6x^2+5x^4+x^6$\\
Which are the Fibonacci polynomials. Using the induced action of
$T\left(1|1-x^2\right)$ we get the generating function of this
sequence
\[
\sum_{n\geq0}F_n(t)x^n=T\left(1|1-x^2\right)\left(\frac{1}{1-xt}\right)=\frac{1}{1-x^2-xt}
\]

The Pell polynomials are related to the Fibonacci polynomials. Now
we consider   $P_0(x)=1$ and $P_1(x)=2x$ with the polynomial
recurrence $P_{n}(x)=2xP_{n-1}(x)+P_{n-2}(x)$. So
$\displaystyle{\frac{x-g_1}{g_0}=2x, \ \frac{-g_2}{g_0}=1}$ then
$\displaystyle{g(x)=\frac{1}{2}-\frac{1}{2}x^2}$ and
$\displaystyle{f(x)=\frac{1}{2}}$. Hence the Riordan matrix
involved is
$\displaystyle{T\left(\frac{1}{2}\Big|\frac{1}{2}-\frac{1}{2}x^2\right)}$
with the rule of construction: \[d_{n,k}=d_{n-2,k}+2d_{n-1,k-1},
\qquad k>0\] again the few first rows are:
\[
\left(
  \begin{array}{c|cccccccc}
\frac{1}{2}&&&&&&\\
0&1&&&&&\\
0&0&2&&&&\\
0&1&0&4&&&\\
0&0&4&0&8&&\\
0&1&0&12&0&16&\\
0&0&6&0&32&0&32\\
0&1&0&24&0&80&0&64\\
\vdots&\vdots&\vdots&\vdots&\vdots&\vdots&\vdots&\vdots&\ddots\\
  \end{array}
\right)
\]

with generating function
\[
\sum_{n\geq0}P_n(t)x^n=T\left(\frac{1}{2}|\frac{1}{2}-\frac{1}{2}x^2\right)\left(\frac{1}{1-xt}\right)=\frac{1}{1-x^2-2xt}
\]
We note that:
\[
T\left(\frac{1}{2}\Big|1\right)T(1|1-x^2)T\left(1\Big|\frac{1}{2}\right)=T\left(\frac{1}{2}\Big|\frac{1}{2}-\frac{1}{2}x^2\right)
\]
So, following Proposition 14 in \cite{BanPas}, we get that
$P_n(x)=F_n(2x)$ that is a known property of Pell polynomials.

Another related families of polynomials that we can treat using
these techniques are the Morgan-Voyce families polynomials. If we
consider now the Riordan matrices $T(1|(1-x)^2)$ and
$T(1-x|(1-x)^2)$. These triangles have the same rule of
construction
$\displaystyle{d_{n,k}=2d_{n-1,k}-2d_{n-2,k}+d_{n-1,k-1}}$ but
different initial condition. In fact they are:
\[
\left(
  \begin{array}{c|cccccccc}
1&&&&&&\\
0&1&&&&&\\
0&2&1&&&&\\
0&3&4&1&&&\\
0&4&10&6&1&&\\
0&5&20&21&8&1&\\
0&6&35&56&36&10&1\\
0&7&56&126&120&55&12&1\\
\vdots&\vdots&\vdots&\vdots&\vdots&\vdots&\vdots&\vdots&\ddots\\
  \end{array}
\right) \qquad \left(
  \begin{array}{c|cccccccc}
1&&&&&&\\
-1&1&&&&&\\
0&1&1&&&&\\
0&1&3&1&&&\\
0&1&6&5&1&&\\
0&1&10&15&7&1&\\
0&1&15&35&28&9&1\\
0&1&21&70&84&45&11&1\\
\vdots&\vdots&\vdots&\vdots&\vdots&\vdots&\vdots&\vdots&\ddots\\
  \end{array}
\right)
\]
where

\noindent $B_0(x)=1\qquad\qquad\qquad\qquad\qquad\qquad\qquad\qquad\qquad    b_0(x)=1$\\
\noindent $B_1(x)=2+x\qquad\qquad\qquad\qquad\qquad\qquad\qquad\qquad\     b_1(x)=1+x$\\
\noindent $B_2(x)=3+4x+x^2\qquad\qquad\qquad\qquad\qquad\qquad\quad \  b_2(x)=1+3x+x^2$\\
\noindent $B_3(x)=4+10x+6x^2+x^3\qquad\qquad\qquad\qquad\qquad  b_3(x)=1+6x+5x^2+x^3$\\

In general

\noindent $B_n(x)=(x+2)B_{n-1}(x)-B_{n-2}(x)\qquad\qquad\qquad b_n(x)=(x+2)b_{n-1}(x)-b_{n-2}(x)$\\

with generating functions:
\[
\sum_{n\geq0}B_n(t)x^n=T(1|(1-x)^2)\left(\frac{1}{1-xt}\right)=\frac{1}{1-(2+t)x+x^2}
\]
\[
\sum_{n\geq0}b_n(t)x^n=T(1-x|(1-x)^2)\left(\frac{1}{1-xt}\right)=\frac{1-x}{1-(2+t)x+x^2}
\]


On the other hand it is known that the sequences
 $(B_n(x))_{n\in\N}$ and $(b_n(x))_{n\in\N}$ are related by means of the equalities:
\[
B_n(x)=(x+1)B_{n-1}(x)+b_{n-1}(x)\]\[ b_n(x)=xB_{n-1}(x)+b_{n-1}(x)
\]
Or equivalently
\begin{equation}\label{E:M-V 1}
    B_n(x)-B_{n-1}(x)=b_{n}(x)
\end{equation}

\begin{equation}\label{E:M-V 2}
 b_n(x)-b_{n-1}(x)=xB_{n-1}(x)
\end{equation}

These equalities can be interpreted by means of the product of
adequate Riordan arrays. The first of them,  $(\ref{E:M-V 1})$, is
\[
T(1-x|1)T(1|(1-x)^2)=T(1-x|(1-x)^2)
\]
or,
\[
\left(
  \begin{array}{cccccccc}
1&&&&&\\
-1&1&&&&\\
0&-1&1&&&\\
0&0&-1&1&&\\
0&0&0&-1&1&\\
\vdots&\vdots&\vdots&\vdots&\vdots&\ddots\\
  \end{array}
\right) \left(
  \begin{array}{cccccccc}
1&&&&&\\
2&1&&&&\\
3&4&1&&&\\
4&10&6&1&&\\
5&20&21&8&1&\\
\vdots&\vdots&\vdots&\vdots&\vdots&\ddots\\
  \end{array}
\right)= \left(
  \begin{array}{cccccccc}
1&&&&&\\
1&1&&&&\\
1&3&1&&&\\
1&6&5&1&&\\
1&10&15&7&1&\\
\vdots&\vdots&\vdots&\vdots&\vdots&\ddots\\
  \end{array}
\right)
\]
For the equality $(\ref{E:M-V 2})$ we consider the product of
matrices
\[
T(1-x|1)T(1-x|(1-x)^2)=T((1-x)^2|(1-x)^2)
\]
or,
\[
\left(
  \begin{array}{cccccccc}
1&&&&&\\
-1&1&&&&\\
0&-1&1&&&\\
0&0&-1&1&&\\
0&0&0&-1&1&\\
\vdots&\vdots&\vdots&\vdots&\vdots&\ddots\\
  \end{array}
\right) \left(
  \begin{array}{cccccccc}
1&&&&&\\
1&1&&&&\\
1&3&1&&&\\
1&6&5&1&&\\
1&10&15&7&1&\\
\vdots&\vdots&\vdots&\vdots&\vdots&\ddots\\
  \end{array}
\right)
 =\left(
  \begin{array}{ccccccccc}
1&&&&&\\
0&1&&&&\\
0&2&1&&&\\
0&3&4&1&&\\
0&4&10&6&1&\\
\vdots&\vdots&\vdots&\vdots&\vdots&\ddots\\
  \end{array}
\right)
\]

\end{ejem}

\section{Polynomial sequences associated to Riordan matrices and its recurrence
relations}\label{S:main}

In this section we are going to obtain the basic main result in
this paper as a consequence of our algorithm in \cite{teo} and
stated again in the Introduction as Algorithm \ref{A:algo}. We use
\cite{teo} and \cite{BanPas} for notation and basic results.

\subsection{The main theorem}
\begin{defi}
Consider an infinite lower triangular matrix
$A=(a_{n,j})_{n,j\in\N}$.  We define \textit{the family of
polynomials associated to $A$ }, to the sequence of polynomials
$(p_n(x))_{n\in\N}$, given by
\[p_n(x)=\sum_{j=0}^na_{n,j}x^j, \quad \text{with}\quad n\in\N\]
\end{defi}

Note that the coefficients of the polynomials are given by the
entries in the rows of $A$ in increasing order of the columns till
the main diagonal. Note also that the degree of $p_n(x)$ is less
than or equal to $n$. The family $p_n(x)$ becomes a
\emph{polynomial sequences}, in the usual sense, when the matrix
$A$ is invertible, that is, when all the elements in the main
diagonal are non-null.

Our main result can be given in the following terms:

\begin{thm}\label{T:recu}
Let $D=(d_{n,j})_{n,j\in\N}$  be an infinite lower triangular
matrix. $D$ is a Riordan matrix, or an arithmetical triangle in
the sense of \cite{teo}, if and only if there exist two sequences
$(f_n)$ and $(g_n)$ in $\K$ with $g_0\neq0$ such that the family
of polynomials associated to $D$ satisfies the recurrence
relation:
\[
p_n(x)=\left(\frac{x-g_1}{g_0}\right)p_{n-1}(x)-\frac{g_2}{g_0}p_{n-2}(x)\cdots-\frac{g_{n}}{g_0}p_{0}(x)+\frac{f_{n}}{g_0}
\qquad \forall n\geq0\]
Moreover, in this case, $D=T(f\mid g)$
where $f=\sum_{n\geq0}f_nx^n$ and $g=\sum_{n\geq0}g_nx^n$.
\end{thm}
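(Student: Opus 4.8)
The plan is to prove the two implications separately, using Algorithm~\ref{A:algo} as the engine in both directions. The theorem asserts that a lower triangular matrix $D$ is Riordan if and only if its associated row-polynomials obey the stated recurrence, and that in that case $D=T(f\mid g)$.

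For the forward implication, I would assume $D$ is a Riordan matrix, so $D=T(f\mid g)$ for some $f=\sum_{n\geq0}f_nx^n$ and $g=\sum_{n\geq0}g_nx^n$ with $g_0\neq0$. The key observation is that the entries $d_{n,j}$ then satisfy the construction rules given in Algorithm~\ref{A:algo}. I would take the two-case recurrence for $d_{n,j}$ (the $j>0$ case and the $j=0$ case), multiply the equation for each fixed $j$ by $x^j$, and sum over $j$ from $0$ to $n$. Writing $p_n(x)=\sum_{j=0}^n d_{n,j}x^j$, the term $\frac{d_{n-1,j-1}}{g_0}$ summed against $x^j$ reassembles into $\frac{x}{g_0}p_{n-1}(x)$, while the terms $-\frac{g_k}{g_0}d_{n-k,j}$ reassemble into $-\frac{g_k}{g_0}p_{n-k}(x)$; the lone $j=0$ contribution $\frac{f_n}{g_0}$ survives as the constant term. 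Combining the $-\frac{g_1}{g_0}p_{n-1}(x)$ piece with the $\frac{x}{g_0}p_{n-1}(x)$ piece yields exactly the coefficient $\frac{x-g_1}{g_0}$ in front of $p_{n-1}(x)$, producing the claimed recurrence~(\ref{E:main}).

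For the converse, I would assume the family $(p_n(x))$ associated to $D$ satisfies the recurrence for some sequences $(f_n)$, $(g_n)$ with $g_0\neq0$, and I must show $D=T(f\mid g)$ for $f=\sum f_nx^n$, $g=\sum g_nx^n$. The strategy is to run the reassembly of the previous paragraph in reverse: extract the coefficient of $x^j$ from both sides of the recurrence. Since $p_n(x)=\sum_{j}d_{n,j}x^j$, comparing the coefficient of $x^j$ on each side recovers exactly the Algorithm~\ref{A:algo} rules—the $j>0$ rule and the $j=0$ rule—where the $\frac{x-g_1}{g_0}p_{n-1}$ term splits, its $x$-part feeding $\frac{d_{n-1,j-1}}{g_0}$ into column $j$ and its constant part feeding $-\frac{g_1}{g_0}d_{n-1,j}$. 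Because Algorithm~\ref{A:algo} constructs $T(f\mid g)$ uniquely from the data $(f_n),(g_n)$, matching these rules forces $d_{n,j}$ to coincide with the entries of $T(f\mid g)$, establishing both that $D$ is Riordan and the identification $D=T(f\mid g)$.

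The main obstacle is bookkeeping rather than conceptual: I must handle the index shift in the term $d_{n-1,j-1}$ carefully, checking the boundary cases $j=0$ (where no such term appears and the constant $\frac{f_n}{g_0}$ enters instead) and $j=n$ (the diagonal), and confirm that the convention $d_{n,j}=0$ for $j>n$ makes the finite sums in the recurrence match the potentially longer sums in the algorithm. I would also verify at the start ($n=0$) that the recurrence degenerates correctly to $p_0(x)=\frac{f_0}{g_0}$, matching $d_{0,0}=\frac{f_0}{g_0}$. Once the coefficient-extraction is set up cleanly, both directions are essentially the same computation read forwards and backwards, and the uniqueness built into Algorithm~\ref{A:algo} supplies the identification $D=T(f\mid g)$ with no extra work.
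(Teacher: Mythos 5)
Your proposal is correct and follows essentially the same route as the paper's own proof: the forward direction reassembles Algorithm~\ref{A:algo}'s entrywise rules into the polynomial recurrence by multiplying by $x^j$ and summing (using $d_{n,j}=0$ for $j>n$ to match sum ranges), and the converse extracts coefficients of $x^j$ to recover the algorithm's rules and invokes the uniqueness of the construction to identify $D=T(f\mid g)$. The boundary cases you flag ($j=0$, the diagonal, and $n=0$) are exactly the ones the paper handles implicitly, so nothing is missing.
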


\begin{proof}
If $D$ is a Riordan array we can identify this with an arithmetical
triangle $D=T(f\mid g)$ such that $g_0\neq0$. Following Algorithm
\ref{A:algo} we obtain that the family of polynomials associated to
$T(f\mid g)$ satisfies:
\[
p_n(x)=\sum_{j=0}^nd_{n,j}x^j=d_{n,0}+\sum_{j=1}^nd_{n,j}x^j=\]
\[=\frac{1}{g_0}\left(f_{n}-\sum_{k=1}^{n}g_kd_{n-k,0}\right)+
\sum_{j=1}^n\left(\frac{1}{g_0}\left(d_{n-1,j-1}-\sum_{k=1}^{n}g_kd_{n-k,j}\right)
\right)x^j=
\]
\[
=\frac{1}{g_0}\left(f_{n}-\sum_{j=1}^nd_{n-1,j-1}x^j-\sum_{k=1}^{n}g_kd_{n-k,0}-
\sum_{j=1}^n\sum_{k=1}^{n}g_kd_{n-k,j}x^j\right)=
\]
\[
=\frac{1}{g_0}\left(f_{n}-xp_{n-1}(x)-
\sum_{j=0}^n\sum_{k=1}^{n}g_kd_{n-k,j}x^j\right)=\frac{1}{g_0}\left(f_{n}-xp_{n-1}(x)-
\sum_{k=1}^ng_k\sum_{j=0}^{n-k}d_{n-k,j}x^j\right)=
\]
\[
\frac{1}{g_0}\left(f_{n}-xp_{n-1}(x)-
\sum_{k=1}^ng_kp_{n-k}(x)\right)=\frac{1}{g_0}\left(f_{n}+(g_1-x)p_{n-1}(x)-
\sum_{k=2}^ng_kp_{n-k}(x)\right)=
\]
\[
=\left(\frac{x-g_1}{g_0}\right)p_{n-1}(x)-\frac{g_2}{g_0}p_{n-2}(x)\cdots-\frac{g_{n}}{g_0}p_{0}(x)+\frac{f_{n}}{g_0}
\]

On the other hand, we suppose that
\[
p_n(x)=\left(\frac{x-g_1}{g_0}\right)p_{n-1}(x)-\frac{g_2}{g_0}p_{n-2}(x)\cdots-\frac{g_{n}}{g_0}p_{0}(x)+\frac{f_{n}}{g_0}
\]
for two sequences $(f_n)$ and $(g_n)$. We consider $D=(d_{n,k})$
such that $\displaystyle{p_n(x)=\sum_{j=0}^nd_{n,j}x^j}$. So
$\displaystyle{p_0(x)=\frac{f_0}{g_0}}$ then
$\displaystyle{d_{0,0}=\frac{f_0}{g_0}}$.
\[
p_1(x)=\left(\frac{x-g_1}{g_0}\right)p_0(x)+\frac{f_1}{g_0}=-\frac{g_1}{g_0}d_{0,0}+\frac{f_1}{g_0}+\frac{d_{0,0}}{g_0}x
\]
then
\[
d_{1,0}=-\frac{g_1}{g_0}d_{0,0}+\frac{f_1}{g_0},\qquad
d_{1,1}=\frac{d_{0,0}}{g_0}
\]
\[
p_2(x)=\left(\frac{x-g_1}{g_0}\right)p_1(x)-\frac{g_2}{g_0}p_0(x)+\frac{f_2}{g_0}=
-\frac{g_1}{g_0}d_{1,0}-\frac{g_2}{g_0}d_{0,0}+\frac{f_2}{g_0}+\left(-\frac{g_1}{g_0}d_{1,1}+\frac{d_{1,0}}{g_0}\right)x+\frac{d_{1,1}}{g_0}x^2
\]
so
\[
d_{2,0}=-\frac{g_1}{g_0}d_{1,0}-\frac{g_2}{g_0}d_{0,0}, \qquad
d_{2,1}=-\frac{g_1}{g_0}d_{1,1}+\frac{d_{1,0}}{g_0}, \qquad
d_{2,2}=\frac{d_{1,1}}{g_0}
\]
in general
\[
p_n(x)=\left(\frac{x-g_1}{g_0}\right)p_{n-1}(x)-\frac{g_2}{g_0}p_{n-2}(x)\cdots-\frac{g_{n}}{g_0}p_{0}(x)+\frac{f_{n}}{g_0}
\]
then
\[
d_{n,0}=-\frac{g_1}{g_0}d_{n-1,0}-\frac{g_2}{g_0}d_{n-2,0}\cdots-\frac{g_{n}}{g_0}d_{0,0}+\frac{f_{n}}{g_0}
\]
\[
d_{n,1}=-\frac{g_1}{g_0}d_{n-1,1}-\frac{g_2}{g_0}d_{n-2,1}\cdots-\frac{g_{n}}{g_0}d_{0,1}+\frac{d_{n-1,0}}{g_0}
\]
\[
d_{n,j}=-\frac{g_1}{g_0}d_{n-1,j}-\frac{g_2}{g_0}d_{n-2,j}\cdots-\frac{g_{n}}{g_0}d_{0,j}+\frac{d_{n-1,j-1}}{g_0}
\]
and
\[
d_{n,n-1}=-\frac{g_1}{g_0}d_{n-1,n-1}+\frac{d_{n-1,n-2}}{g_0},
\qquad d_{n,n}=\frac{d_{n-1,n-1}}{g_0}
\]
then using our algorithm the matrix $D$ is just $D=T(f|g)$ where
$f(x)=\sum_{n\geq0}f_nx^n$ and $g(x)=\sum_{n\geq0}g_nx^n$.
\end{proof}

\begin{cor}
If $g(x)=g_0+g_1x+g_2x^2+\cdots+g_mx^m$ with $g_m\neq0$ be a
polynomial of degree $m$, the recurrence relation of Theorem
\ref{T:recu} is eventually finite. It is,
\[
p_n(x)=\left(\frac{x-g_1}{g_0}\right)p_{n-1}(x)-\frac{g_2}{g_0}p_{n-2}(x)\cdots-\frac{g_{m}}{g_0}p_{n-m}(x)+\frac{f_{n}}{g_0}
\qquad n\geq m
\]
and
\[
p_k(x)=\left(\frac{x-g_1}{g_0}\right)p_{k-1}(x)-\sum_{i=2}^k\frac{g_i}{g_0}p_{k-i}(x)+\frac{f_k}{g_0}
\qquad 0\leq k \leq m-1
\]
\end{cor}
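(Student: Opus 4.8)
The plan is to deduce this directly from the main recurrence in Theorem~\ref{T:recu} by exploiting the single new hypothesis, namely that $g$ is a polynomial of degree $m$. The only substantive input is the observation that $g(x)=g_0+g_1x+\cdots+g_mx^m$ with $g_m\neq0$ forces $g_i=0$ for every $i>m$. Everything else is bookkeeping on the index of summation.

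First I would write the recurrence of Theorem~\ref{T:recu}, for a fixed $n$, in the summed form
\[
p_n(x)=\left(\frac{x-g_1}{g_0}\right)p_{n-1}(x)-\sum_{i=2}^n\frac{g_i}{g_0}p_{n-i}(x)+\frac{f_n}{g_0},
\]
which holds for all $n\geq0$. The point is that each coefficient $g_i$ with $i>m$ vanishes, so the corresponding summand $-\frac{g_i}{g_0}p_{n-i}(x)$ drops out.

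Then I would split into the two ranges stated in the corollary. For $n\geq m$ the index $i$ runs from $2$ to $n$, but only the terms with $i\leq m$ are nonzero; hence the sum collapses to $\sum_{i=2}^m\frac{g_i}{g_0}p_{n-i}(x)$, producing exactly the finite recurrence whose last term is $-\frac{g_m}{g_0}p_{n-m}(x)$. For $0\leq k\leq m-1$ the upper limit $k$ of the original sum is already strictly below $m$, so no summand is removed and the recurrence is reproduced verbatim as the second displayed formula.

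The argument involves no real obstacle; the only point deserving a moment's care is that the truncation has an actual effect only when $n>m$, where the terms $-\frac{g_i}{g_0}p_{n-i}(x)$ with $m<i\leq n$ are discarded. At $n=m$ the upper limit of the sum already equals $m$, so the first formula is literally the Theorem~\ref{T:recu} recurrence, and for $n<m$ likewise nothing is removed, yielding the second formula. I would close by remarking that ``eventually finite'' refers precisely to the fact that for every $n\geq m$ the right-hand side involves at most the $m$ preceding polynomials $p_{n-1},\dots,p_{n-m}$ together with the constant $f_n/g_0$, a number of terms bounded independently of $n$.
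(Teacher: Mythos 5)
Your proposal is correct and is exactly the argument the paper has in mind: the corollary is stated without proof as an immediate consequence of Theorem \ref{T:recu}, and your observation that $g_i=0$ for $i>m$ truncates the sum for $n\geq m$ (while leaving it untouched for $0\leq k\leq m-1$) is the intended, and essentially only, justification.
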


\begin{rmk}\label{R:f0neq0}
Following \cite{teo} the arithmetical triangle $T(f\mid g)$ above
is an element of the Riordan group when it is invertible for the
product of matrices. It is obviously equivalent to the fact that
$f_0\neq0$ in the sequence $(f_n)$ above.
\end{rmk}

Suppose that we have two Riordan matrices $T(f|g)$, $T(l|m)$ with
$\displaystyle{f=\sum_{n\geq0}f_nx^n,\ g=\sum_{n\geq0}g_nx^n}$
$\displaystyle{l=\sum_{n\geq0}l_nx^n}$ and
$\displaystyle{m=\sum_{n\geq0}m_nx^n}$ with $g_0, m_0\neq0$.
Consider the corresponding families of polynomials
$(p_n(x))_{n\in\N}$ and $(q_n(x))_{n\in\N}$ associated to $T(f|g)$
and  $T(l|m)$ respectively, as in Theorem \ref{T:recu}. Using the
matrix representation of $T(f|g)$ and $T(l|m)$, \cite{teo}, and
the product of matrices, we can define an operation $\sharp$ on
these sequences of polynomials as follows:

We say that
\[(p_n(x))_{n\in\N}\sharp(q_n(x))_{n\in\N}=(r_n(x))_{n\in\N}\]
where $(r_n(x))_{n\in\N}$ is the family of polynomials associated
to the Riordan matrix
\[T(f|g)T(l|m)=T\left(fl\left(\frac{x}{g}\right)\Big|gm\left(\frac{x}{g}\right)\right)\]
see \cite{teo}.

Suppose $T(f|g)=(p_{n,k})_{n,k\in\N}$, $T(l|m)=(q_{n,k})_{n,k\in\N}$
and
$T\left(fl\left(\frac{x}{g}\right)\Big|gm\left(\frac{x}{g}\right)\right)=(r_{n,k})_{n,k\in\N}$.
Consequently $\displaystyle{p_n(x)=\sum_{k=0}^np_{n,k}x^k}$,
$\displaystyle{q_n(x)=\sum_{k=0}^nq_{n,k}x^k}$ and
$\displaystyle{r_n(x)=\sum_{k=0}^nr_{n,k}x^k}$.

\[
\left(
  \begin{array}{cccccc}
    p_{0,0} &  &  &  & & \\
    p_{1,0} & p_{1,1} &  &  & & \\
    p_{2,0} & p_{2,1} & p_{2,2} &  & & \\
    \vdots & \vdots & \vdots & \ddots &&  \\
    p_{n,0} & p_{n,1} & p_{n,2} & \cdots & p_{n,n}& \cdots\\
    \vdots & \vdots & \vdots & \cdots &\vdots& \ddots  \\
  \end{array}
\right) \left(
  \begin{array}{cccccc}
    q_{0,0} &  &  &  & & \\
    q_{1,0} & q_{1,1} &  &  & & \\
    q_{2,0} & q_{2,1} & q_{2,2} &  & & \\
    \vdots & \vdots & \vdots & \ddots &&  \\
    q_{n,0} & q_{n,1} & q_{n,2} & \cdots & q_{n,n}& \cdots\\
    \vdots & \vdots & \vdots & \cdots &\vdots& \ddots  \\
  \end{array}
\right)= \left(
  \begin{array}{cccccc}
    r_{0,0} &  &  &  & & \\
    r_{1,0} & r_{1,1} &  &  & & \\
    r_{2,0} & r_{2,1} & r_{2,2} &  & & \\
    \vdots & \vdots & \vdots & \ddots &&  \\
    r_{n,0} & r_{n,1} & r_{n,2} & \cdots & r_{n,n}& \cdots\\
    \vdots & \vdots & \vdots & \cdots &\vdots& \ddots  \\
  \end{array}
\right)
\]

So the entries in the $n$-row of $(r_{n,k})$, which are just the
coefficients of $r_n(x)$ in increasing order of the power of $x$,
are given by:

\[
\left(\sum_{k=0}^np_{n,k}q_{k,0},\sum_{k=1}^np_{n,k}q_{k,1},
\cdots\sum_{k=j}^np_{n,k}q_{k,j}\cdots
p_{n,n}q_{n,n},0,\cdots\right)=\]
\[p_{n,0}(q_{0,0},0,\cdots,0,\cdots)+p_{n,1}(q_{1,0},q_{1,1},0,\cdots,0,\cdots)+
\cdots+p_{n,n}(q_{n,0},q_{n,1},\cdots,q_{n,n},0,\cdots)
\]
Consequently
\[
r_n(x)=\sum_{k=0}^np_{n,k}q_k(x)
\]
which corresponds to substitute in the expression of
$p_n(x)=\sum_{k=0}^np_{n,k}x^k$ the power $x^k$ by the element
$q_k(x)$ in the sequence of polynomials $(q_{n}(x))_{n\in\N}$.
This is in the spirit of the Blissard symbolic's method, see
\cite{Bell} for an exposition on this topic. The product
$\displaystyle{(p_n(x))_{n\in\N}\sharp(q_n(x))_{n\in\N}=(r_n(x))_{n\in\N}}$
is usually called the umbral composition of the sequences of
polynomials $(p_n(x))$ and $(q_n(x))$. The formula for the umbral
composition is given by
\[
(p_n(x))_{n\in\N}\sharp(q_n(x))_{n\in\N}=(r_n(x))_{n\in\N}
\]
where
\[
r_{n,j}=\sum_{k=j}^np_{n,k}q_{k,j}
\]
As a summary of the above construction we have:
\begin{thm}
Suppose four sequences of elements of $\K$, $(f_n)_{n\in\N}$,
$(g_n)_{n\in\N}$, $(l_n)_{n\in\N}$, $(m_n)_{n\in\N}$, with
$g_0,m_0\neq0$. Consider the sequences of polynomials
$(p_n(x))_{n\in\N}$ $(q_n(x))_{n\in\N}$ satisfying the following
recurrences relations
\[
p_n(x)=\left(\frac{x-g_1}{g_0}\right)p_{n-1}(x)-\frac{g_2}{g_0}p_{n-2}(x)\cdots-\frac{g_{n}}{g_0}p_{0}(x)+\frac{f_{n}}{g_0}
\]
with $\displaystyle{p_0(x)=\frac{f_0}{g_0}}$,
\[
q_n(x)=\left(\frac{x-m_1}{m_0}\right)q_{n-1}(x)-\frac{m_2}{m_0}q_{n-2}(x)\cdots-\frac{m_{n}}{m_0}q_{0}(x)+\frac{l_{n}}{m_0}
\]
with $\displaystyle{q_0(x)=\frac{l_0}{m_0}}$. Then the umbral
composition
$\displaystyle{(p_n(x))_{n\in\N}\sharp(q_n(x))_{n\in\N}=(r_n(x))_{n\in\N}}$
satisfies the following recurrence relation
\[
r_n(x)=\left(\frac{x-\al_1}{\al_0}\right)r_{n-1}(x)-\frac{\al_2}{\al_0}r_{n-2}(x)\cdots-\frac{\al_{n}}{\al_0}r_{0}(x)+\frac{\beta_{n}}{\al_0}
\]
where $(\al_n)_{n\in\N}$, $(\beta_n)_{n\in\N}$ are sequences such
that
$\displaystyle{fl\left(\frac{x}{g}\right)=\sum_{n\geq0}\beta_nx^n}$,
$\displaystyle{gm\left(\frac{x}{g}\right)=\sum_{n\geq0}\al_nx^n}$,
with $\displaystyle{f=\sum_{n\geq0}f_nx^n}$,
$\displaystyle{g=\sum_{n\geq0}g_nx^n}$
$\displaystyle{l=\sum_{n\geq0}l_nx^n}$ and
$\displaystyle{m=\sum_{n\geq0}m_nx^n}$.
\end{thm}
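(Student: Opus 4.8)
The plan is to reduce the statement to two facts already in hand: Theorem \ref{T:recu}, which identifies satisfaction of a recurrence of type (\ref{E:main}) with being the family of polynomials associated to a Riordan matrix, and the description of $\sharp$ given just above the statement, namely that $(p_n(x))\sharp(q_n(x))$ is the family of polynomials associated to the product matrix $T(f|g)T(l|m)=T\left(fl\left(\frac{x}{g}\right)\Big|gm\left(\frac{x}{g}\right)\right)$. The whole proof is then a chaining of three identifications, with essentially no new computation.

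First I would read off the two input recurrences through Theorem \ref{T:recu}. Since $(p_n(x))$ satisfies the recurrence built from $(f_n),(g_n)$ with $g_0\neq0$, that theorem says $(p_n(x))$ is exactly the family associated to the Riordan matrix $T(f|g)$; likewise $(q_n(x))$ is the family associated to $T(l|m)$. By the definition of $\sharp$ recalled above the statement, the sequence $(r_n(x))=(p_n(x))\sharp(q_n(x))$ is therefore the family of polynomials associated to the single Riordan matrix $T(f|g)T(l|m)$.

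Next I would invoke the product formula in the Riordan group to write this product as one Riordan matrix $T(\beta|\al)$, where $\beta=fl\left(\frac{x}{g}\right)=\sum_{n\geq0}\beta_nx^n$ and $\al=gm\left(\frac{x}{g}\right)=\sum_{n\geq0}\al_nx^n$, matching the sequences $(\beta_n)$ and $(\al_n)$ named in the statement. The one thing that genuinely must be checked — and it is the only potential obstacle — is that $\al$ has nonzero constant term, so that Theorem \ref{T:recu} is applicable to $T(\beta|\al)$ and the target recurrence is well defined. Evaluating at $x=0$ (where $\frac{x}{g}$ vanishes since $g(0)=g_0\neq0$) gives $\al_0=g(0)\,m(0)=g_0 m_0\neq0$, which holds precisely because $g_0,m_0\neq0$. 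This is the compatibility condition guaranteeing that $T(f|g)T(l|m)$ is again a bona fide Riordan matrix.

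Finally, with $\al_0\neq0$ secured, I would apply the forward direction of Theorem \ref{T:recu} to $T(\beta|\al)$: its associated family of polynomials, which we have already identified with $(r_n(x))$, satisfies the recurrence (\ref{E:main}) written with coefficients $(\beta_n)$ and $(\al_n)$. This is verbatim the asserted recurrence for $(r_n(x))$, so the argument closes. I expect no hard step beyond the constant-term verification of $\al_0$; everything else is bookkeeping that transports the recurrence through the matrix product.
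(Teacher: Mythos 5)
Your proposal is correct and follows essentially the same route as the paper: the theorem there is stated as a summary of the immediately preceding construction, in which $\sharp$ is defined via the matrix product $T(f|g)T(l|m)=T\left(fl\left(\frac{x}{g}\right)\Big|gm\left(\frac{x}{g}\right)\right)$ and Theorem \ref{T:recu} is used in both directions to translate between recurrences and Riordan matrices. Your explicit check that $\al_0=g_0m_0\neq0$ is a detail the paper leaves implicit, but it is the same argument.
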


Of special interest is when we restrict ourselves to the so called
proper Riordan arrays, see \cite{Spr94}. As noted in Remark
\ref{R:f0neq0} this is the case when $f_0\neq0$ or, equivalently,
$T(f\mid g)$ is in the Riordan group. Moreover, in this case, the
assignment $T(f|g)\rightarrow(p_n(x))_{n\in\N}$ is injective,
obviously, and since the product of matrices converts to the
umbral composition of the corresponding associated polynomial
sequences, we have the following alternative description of the
Riordan group.
\begin{thm}
Let $\K$ be a field of characteristic zero. Consider
$\mathcal{R}=\{(p_n(x))_{n\in\N}\}$ where $(p_n(x))_{n\in\N}$ is a
polynomial sequence with coefficients in $\K$ satisfying that
there are two sequences $(f_n)_{n\in\N}$, $(g_n)_{n\in\N}$ of
elements of $\K$, depending on $(p_n(x))_{n\in\N}$, with
$f_0,g_0\neq0$ and such that
\[
p_n(x)=\left(\frac{x-g_1}{g_0}\right)p_{n-1}(x)-\frac{g_2}{g_0}p_{n-2}(x)\cdots-\frac{g_{n}}{g_0}p_{0}(x)+\frac{f_{n}}{g_0}
\]
with $\displaystyle{p_0(x)=\frac{f_0}{g_0}}$.

Given $(p_n(x))_{n\in\N}$, $(q_n(x))_{n\in\N}$ $\in \mathcal{R}$
Define
$\displaystyle{(p_n(x))_{n\in\N}\sharp(q_n(x))_{n\in\N}=(r_n(x))_{n\in\N}}$
where $r_n(x)=\sum_{k=0}^np_{n,k}q_k(x)$ with
$p_n(x)=\sum_{k=0}^np_{n,k}x^k$. Then $(\mathcal{R},\sharp)$ is a
group isomorphic to the Riordan group. Moreover
\[
\sum_{n\geq0}p_n(t)x^n=\frac{f(x)}{g(x)-xt}
\]
if $\displaystyle{f=\sum_{n\geq0}f_nx^n}$ and
$\displaystyle{g=\sum_{n\geq0}g_nx^n}$ and $(f_n)$ and $(g_n)$ are
the sequences generating the polynomial sequence $(p_n(x))$ in
$\mathcal{ R}$.
\end{thm}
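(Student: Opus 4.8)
The plan is to realize the assignment $\Phi\colon T(f\mid g)\mapsto (p_n(x))_{n\in\N}$, sending a Riordan matrix to its associated polynomial sequence, as a group isomorphism, and then to extract the generating function from the induced action of $T(f\mid g)$ on the series $\frac{1}{1-xt}$.

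First I would establish that $\Phi$ is a bijection between the Riordan group and $\mathcal{R}$. By Remark \ref{R:f0neq0} the elements of the Riordan group are exactly the matrices $T(f\mid g)$ with $f_0\neq0$ and $g_0\neq0$, and by Theorem \ref{T:recu} each such matrix corresponds to a polynomial sequence satisfying the displayed recurrence; conversely every sequence in $\mathcal{R}$ arises this way, with $D=T(f\mid g)$. Hence $\Phi$ is onto $\mathcal{R}$. Injectivity is immediate, since the coefficients of $p_n(x)$ are precisely the entries of the $n$-th row of $T(f\mid g)$, so two Riordan matrices inducing the same polynomial sequence have identical rows and therefore coincide.

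Next I would check that $\Phi$ carries the matrix product to $\sharp$. This is exactly the computation carried out just before the statement: writing $T(f\mid g)=(p_{n,k})$ and $T(l\mid m)=(q_{n,k})$, the $n$-th row of the product matrix gives $r_n(x)=\sum_{k=0}^{n}p_{n,k}q_k(x)$, which is the definition of $(p_n(x))\sharp(q_n(x))$. Because the Riordan group is closed under products, this simultaneously shows that $\mathcal{R}$ is closed under $\sharp$ and that $\Phi$ is a homomorphism. A bijective homomorphism out of a group transports the whole group structure: associativity, the neutral element (the image of $T(1\mid1)$, i.e. the sequence $p_n(x)=x^n$), and inverses all pass to $\mathcal{R}$. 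Thus $(\mathcal{R},\sharp)$ is a group and $\Phi$ is an isomorphism onto the Riordan group.

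Finally, for the generating function I would feed $h(x)=\frac{1}{1-xt}=\sum_{k\geq0}t^kx^k$ into the induced action $T(f\mid g)(h(x))=\frac{f(x)}{g(x)}h\!\left(\frac{x}{g(x)}\right)$. Expanding by columns, and using that the $k$-th column of $T(f\mid g)=(d_{n,k})$ has generating function $\sum_n d_{n,k}x^n$, gives
\[
T(f\mid g)\!\left(\frac{1}{1-xt}\right)=\sum_{k\geq0}t^k\sum_{n\geq0}d_{n,k}x^n=\sum_{n\geq0}\Bigl(\sum_{k=0}^{n}d_{n,k}t^k\Bigr)x^n=\sum_{n\geq0}p_n(t)x^n,
\]
whereas the direct substitution gives $\frac{f(x)}{g(x)}\cdot\frac{1}{1-\frac{x}{g(x)}t}=\frac{f(x)}{g(x)-xt}$. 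Equating the two expressions yields the claimed formula. The step deserving the most attention is the homomorphism property, which is where the nontrivial algebra of the product $T(f\mid g)T(l\mid m)=T\!\bigl(fl(\tfrac{x}{g})\mid gm(\tfrac{x}{g})\bigr)$ enters; once that is in hand the remaining verifications are routine transport-of-structure together with the one-line substitution above.
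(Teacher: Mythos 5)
Your proposal is correct and takes essentially the same route as the paper: the paper's proof likewise treats the bijection/homomorphism part as already settled by the umbral-composition computation preceding the statement (it begins ``Only a proof of the final part is needed''), and it derives the generating function exactly as you do, by evaluating $T(f\mid g)\left(\frac{1}{1-xt}\right)$ in two ways. Namely, as the matrix $(p_{n,k})$ acting on the parametric coefficient column $(1,t,t^2,\dots)^{T}$, which yields $\sum_{n\geq0}p_n(t)x^n$, and via the induced action $\frac{f(x)}{g(x)}\cdot\frac{1}{1-t\frac{x}{g(x)}}=\frac{f(x)}{g(x)-xt}$.
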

\begin{proof}
Only a proof of the final part is needed. As we know, from Theorem
\ref{T:recu}, $T(f|g)=(p_{n,k})_{n,k\in\N}$ is a proper Riordan
array where $p_n(x)=\sum_{k=0}^np_{n,k}x^k$,
$\displaystyle{\frac{1}{1-xt}=\sum_{n\geq0}t^nx^n}$. We consider,
symbolically, $\displaystyle{\frac{1}{1-xt}}$ as a power series on
$x$ with \emph{parametric} coefficients $a_n=t^n$. From this point
of view, \cite{teo},
\[
T(f|g)\left(\frac{1}{1-xt}\right)=\left(
  \begin{array}{cccccc}
    p_{0,0} &  &  &  & & \\
    p_{1,0} & p_{1,1} &  &  & & \\
    p_{2,0} & p_{2,1} & p_{2,2} &  & & \\
    \vdots & \vdots & \vdots & \ddots &&  \\
    p_{n,0} & p_{n,1} & p_{n,2} & \cdots & p_{n,n}& \cdots\\
    \vdots & \vdots & \vdots & \cdots &\vdots& \ddots  \\
  \end{array}
\right) \left(
 \begin{array}{c}
 1\\
 t\\
 t^2\\
 \vdots\\
 t^n\\
 \vdots
  \end{array}
 \right)=\sum_{k=0}^np_n(t)x^k
\]
\[
T(f|g)\left(\frac{1}{1-xt}\right)=\frac{f(x)}{g(x)}\frac{1}{1-t\frac{x}{g}}=\frac{f(x)}{g(x)-xt}
\]
\end{proof}

\begin{rmk}
Note that $\displaystyle{\sum_{k=0}^np_n(t)x^k}$ is just the
bivariate generating function of the Riordan array
$T(f|g)=(p_{n,k})_{n,k\in\N}$ in the sense of \cite{Spr94}.
\end{rmk}

\subsection{Some relationships between polynomials sequences of Riordan type and some classical examples}

Now we are going to describe some relations between polynomial
sequences associated to different but related Riordan arrays. From
now on we are going to use the following definition:
\begin{defi}
Let $(p_n(x))_{n\in\N}$ be a sequence of polynomials in $\K[[x]]$,
$p_n(x)=\sum_{k=0}^np_{n,k}x^k$. We say that $(p_n(x))_{n\in\N}$
is a \emph{polynomial sequence of Riordan type} if the matrix
$(p_{n,k})$ is an element of the Riordan group.
\end{defi}

Using the basic equality $\displaystyle{T(f\mid g )=T(f\mid 1
)T(1\mid g )}$ we can get some formulas.

\begin{prop}\label{P:relat1pn}
Let $T(f\mid g)$ an element of the Riordan group and suppose
$(p_n(x))$ the corresponding associated family of polynomials. Let
$h(x)=h_0+h_1x+h_2x^2+\cdots+h_mx^m$ be a $m$ degree polynomial,
$h_m\neq0$. Let $(q_n(x))$ be the associated family of polynomials
of $T(h\mid 1)T(f\mid g)$ then
\[
q_0(x)=h_0p_0(x)
\]
\[
q_1(x)=h_1p_0(x)+h_0p_1(x)
\]
\[
\vdots
\]
\[
q_m(x)=h_mp_{n-m}(x)+\cdots+h_0p_m(x)
\]
\[
q_n(x)=h_mp_{n-m}(x)+\cdots+h_0p_n(x)\ \qquad n\geq m
\]
\end{prop}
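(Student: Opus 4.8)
The statement relates the polynomial family of a Riordan array $T(f\mid g)$ to that of the left-multiplied array $T(h\mid 1)T(f\mid g)$, where $h$ is a polynomial of degree $m$. Let me think about what $T(h\mid 1)$ is and how it acts.

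The key is that $T(h\mid 1)$ has first term $h/1 = h$ and rate $x/1 = x$, so it's the matrix whose $0$-column is the coefficients of $h$ and whose columns are just shifts. Since $h$ is a polynomial of degree $m$, $T(h\mid 1)$ is a banded lower-triangular matrix with entries $h_0, h_1, \ldots, h_m$ down the diagonals. Multiplying on the left takes row combinations. The earlier umbral-composition discussion tells me that $r_n(x) = \sum_k (\text{row } n \text{ of left factor})_{n,k} \, p_k(x)$... wait, but that was for the product as umbral composition, where the LEFT factor's rows gave the coefficients and the RIGHT factor's polynomials were substituted.

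Let me reconsider. The umbral composition result says: if $A = T(f\mid g)$ with polynomials $(p_n)$ and $B = T(l\mid m)$ with polynomials $(q_n)$, then $AB$ has polynomials $r_n(x) = \sum_k p_{n,k} q_k(x)$. Here the product is $T(h\mid 1) \cdot T(f\mid g)$, so the LEFT factor is $T(h\mid 1)$ with its own polynomials, and the right factor is $T(f\mid g)$ with polynomials $(p_n)$.

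So I'd need the polynomials of $T(h\mid 1)$. Since $T(h\mid 1)$ has $0$-column $h$ and rate $x$, its entry $d_{n,k}$ is $h_{n-k}$ (the coefficient of $x^{n-k}$ in $h$), nonzero only for $0 \le n-k \le m$. So the $n$-th polynomial of $T(h\mid 1)$ is $\sum_{k} h_{n-k} x^k$.

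**Plan.** First I would identify $T(h\mid 1) = (c_{n,k})$ explicitly. Since $T(h\mid 1)$ sends a series $\phi$ to $h(x)\phi(x)$, its $0$-column is $h$ and it is the lower-triangular Toeplitz matrix with $c_{n,k} = h_{n-k}$ (understood as $0$ when $n-k < 0$ or $n-k > m$). Second, I would invoke the umbral-composition formula established just above the statement (the displayed identity $r_n(x) = \sum_{k=0}^n p_{n,k} q_k(x)$), applied with the LEFT factor $T(h\mid 1)$ supplying the coefficients $c_{n,k}$ and the RIGHT factor $T(f\mid g)$ supplying the substituted polynomials $(p_k(x))$. This gives immediately
\[
q_n(x) = \sum_{k=0}^n c_{n,k} \, p_k(x) = \sum_{k=\max(0,n-m)}^{n} h_{n-k}\, p_k(x).
\]
Third, I would read off the cases: for $n \ge m$ the sum runs over $k = n-m, \ldots, n$, giving $q_n(x) = h_m p_{n-m}(x) + \cdots + h_0 p_n(x)$; for $n < m$ the lower limit is $0$, giving the truncated formulas $q_0 = h_0 p_0$, $q_1 = h_1 p_0 + h_0 p_1$, and so on.

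**Main obstacle.** The only real care-point is the direction of the umbral-composition formula and a typo-check on the statement: the displayed formula $r_n(x)=\sum_k p_{n,k}q_k(x)$ was derived for $T(f\mid g)T(l\mid m)$ with the \emph{first} factor's row entries as coefficients. Here $T(h\mid 1)$ is the first factor, so its entries $c_{n,k}=h_{n-k}$ are the coefficients and $(p_k(x))$ (from the second factor) are substituted — which is exactly what produces the stated banded recurrence. I should also note the printed line ``$q_m(x)=h_mp_{n-m}(x)+\cdots$'' evidently means $q_m(x)=h_m p_0(x)+\cdots+h_0 p_m(x)$, the boundary case of the general formula at $n=m$. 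Everything else is just reading off the band structure of the polynomial $h$, so the argument is essentially a one-line application of the already-established umbral composition together with the explicit form of $T(h\mid 1)$; no delicate estimate is involved.
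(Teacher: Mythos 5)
Your proposal is correct and follows exactly the route the paper intends: the paper states this proposition without a separate proof precisely because it is an immediate consequence of the umbral-composition formula $r_n(x)=\sum_{k=0}^n p_{n,k}q_k(x)$ established just before it, applied with the Toeplitz matrix $T(h\mid 1)=(h_{n-k})$ as the left factor (as the subsequent Remark on row operations confirms). Your reading of the displayed line $q_m(x)=h_mp_{n-m}(x)+\cdots+h_0p_m(x)$ as a typo for $q_m(x)=h_mp_{0}(x)+\cdots+h_0p_m(x)$ is also the right interpretation.
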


\begin{rmk}
Note that to multiply by the left by the Toepliz matrix
$T(h\mid1)$ above corresponds eventually to make some fixed
elementary operations by rows on the matrix $T(f\mid g)$. These
operations are completely determined by the coefficients of the
polynomial $h$. For example if $h(x)=a+bx$ then $q_0(x)=ap_0(x)$
and $q_n(x)=bp_{n-1}(x)+ap_n(x)$.
\end{rmk}

As a direct application of Proposition \ref{P:relat1pn} we will
obtain the known relationships between Chebysev polynomials of the
first and second kind.

\begin{ejem} {\bf The Chebyshev polynomials of the first and the second
kind.}

Consider the Chebyshev polynomials of the second kind:

\begin{equation}\label{E:U}
\begin{array}{c}
                                    U_0(x)=1 \\
                                    U_1(x)=2x \\
                                    U_2(x)=4x^2-1 \\
                                    U_3(x)=8x^3-4x \\
                                    U_4(x)=16x^4-12x^2+1\\
U_n(x)=2xU_{n-1}(x)-U_{n-2}(x) \qquad \text{ for } \qquad n\geq2
                                  \end{array}
\end{equation}

Let the sequences $(l_n)_{n\in\N}$, $(m_n)_{n\in\N}$ given by
$\displaystyle{l_0=\frac{1}{2}}$ and $l_n=0$ for $n\geq1$ and
$\displaystyle{m_0=\frac{1}{2}}$, $\displaystyle{m_2=\frac{1}{2}}$
and $m_n=0$ otherwise. In this case $(\ref{E:U})$ can be converted
to

\begin{equation}\label{E:Urecurrence}
\begin{array}{c}
  U_0(x)=\frac{l_0}{m_0} \\
  U_n(x)=\left(\frac{x-m_1}{m_0}\right)U_{n-1}(x)-\frac{m_2}{m_0}U_{n-2}(x)\cdots-\frac{m_{n}}{m_0}U_{0}(x)+\frac{l_{n}}{m_0},
  \ \text{for} \ n\geq1
\end{array}
\end{equation}

If $U=(u_{n,k})_{n,k\in\N}$ where
$\displaystyle{U_n(x)=\sum_{k=0}^nu_{n,k}x^k}$ then using our
algorithm, or equivalently Theorem \ref{T:recu}, we obtain that
$U=\displaystyle{T\left(\frac{1}{2}\Big|\frac{1}{2}+\frac{1}{2}x^2\right)}$
is a Riordan matrix:
\[
\left(
  \begin{array}{c|cccccc}
\frac{1}{2}&&&&&&\\
0&1&&&&&\\
0&0&2&&&&\\
0&-1&0&4&&&\\
0&0&-4&0&8&&\\
0&1&0&-12&0&16&\\
\vdots&\vdots&\vdots&\vdots&\vdots&\vdots&\ddots\\
  \end{array}
\right)
\]

So the associated polynomials of this arithmetical triangle are
the Chebyshev polynomials of the second kind.

Consequently
\[
\sum_{n\geq0}U_n(t)x^n=T\left(\frac{1}{2}\Big|\frac{1}{2}+\frac{1}{2}x^2\right)\left(\frac{1}{1-xt}\right)=\frac{1}{1+x^2-2xt}
\]

The first few Chebyshev polynomials of the first kind are

\noindent $T_0(x)=1$\\
\noindent $T_1(x)=x$\\
\noindent $T_2(x)=2x^2-1$\\
\noindent $T_3(x)=4x^3-3x$\\
\noindent $T_4(x)=8x^4-8x^2+1$\\

In general
\[
T_n(x)=2xT_{n-1}(x)-T_{n-2}(x) \qquad \text{ for } \qquad n\geq2
\]

We first produce a small perturbation in this classical sequence.
Consider a new sequence $(\tT(x))_{n\in\N}$ where
$\displaystyle{\tT_0(x)=\frac{1}{2}}$ and $\tT_n(x)=T_n(x)$ for
every $n\geq1$. For this new sequence we have the following
recurrence relation
\begin{equation}\label{E:tT}
    \begin{array}{c}
      \tT_0(x)=\frac{1}{2} \\
      \tT_1(x)=2x\tT_0(x) \\
      \tT_2(x)=2x\tT_1(x)-\tT_0(x)-\frac{1}{2} \\
      \tT_n(x)=2x\tT_{n-1}(x)-\tT_{n-2}(x)\ \text{for } \ n\geq3
    \end{array}
\end{equation}

to unify the above equalities we consider the sequences
$(f_n)_{n\in\N}$, $(g_n)_{n\in\N}$ given by
$\displaystyle{f_0=\frac{1}{4}}$,
$\displaystyle{f_2=-\frac{1}{4}}$ and $f_n=0$ otherwise,
$\displaystyle{g_0=\frac{1}{2}}$, $\displaystyle{g_2=\frac{1}{2}}$
and $g_n=0$ otherwise. We note that the equalities in
$(\ref{E:tT})$ can be converted to
\begin{equation}\label{E:tTrecurrence}
\begin{array}{c}
  \tT_0(x)=\frac{f_0}{g_0} \\
  \tT_n(x)=\left(\frac{x-g_1}{g_0}\right)\tT_{n-1}(x)-\frac{g_2}{g_0}\tT_{n-2}(x)\cdots-\frac{g_{n}}{g_0}\tT_{0}(x)+\frac{f_{n}}{g_0},
  \ \text{for} \ n\geq1
\end{array}
\end{equation}

Let $\tT=(\tit_{n,k})$ be the matrix given by
$\displaystyle{\tT_n(x)=\sum_{k=0}^n\tit_{n,k}x^k}$. 
One can verifies that
$(\ref{E:tTrecurrence})$ converts to $\tit_{n,k}=0$ if $k>n$ and
the following rules for $n\geq k$:

\[
\tit_{n,j}=-\frac{g_1}{g_0}\tit_{n-1,j}-\frac{g_2}{g_0}\tit_{n-2,j}\cdots-\frac{g_{n}}{g_0}\tit_{0,j}+
\frac{\tit_{n-1,j-1}}{g_0}\ \text{if} \ j\geq1
\]
and if $j=0$
\[
\tit_{n,0}=-\frac{g_1}{g_0}\tit_{n-1,0}-\frac{g_2}{g_0}\tit_{n-2,0}\cdots-\frac{g_{n}}{g_0}\tit_{0,0}+\frac{f_{n}}{g_0}
\]

Note that $\displaystyle{\tit_{0,0}=\frac{f_0}{g_0}}$ because the
empty sum evaluates to zero.

Using our algorithm in \cite{teo}, we obtain that $\tT$ is a
Riordan matrix. In fact we get
$\tT=\displaystyle{T\left(\frac{1}{4}-\frac{1}{4}x^2\Big|\frac{1}{2}+\frac{1}{2}x^2\right)}$
in our notation, because
$\displaystyle{f(x)=\frac{1}{4}-\frac{1}{4}x^2}$ is the generating
function of the sequence $(f_n)$ and
$\displaystyle{g(x)=\frac{1}{2}+\frac{1}{2}x^2}$ is the generating
function of the sequence $(g_n)$. So

\[
\left(
  \begin{array}{c|cccccc}
\frac{1}{4}&&&&&&\\
0&\frac{1}{2}&&&&&\\
\frac{1}{4}&0&1&&&&\\
0&-1&0&2&&&\\
0&0&-3&0&4&&\\
0&1&0&-8&0&8&\\
\vdots&\vdots&\vdots&\vdots&\vdots&\vdots&\ddots\\
  \end{array}
\right)
\]

But now more can be said because
\[
\sum_{n\geq0}\tT_n(t)x^n=T\left(\frac{1}{4}-\frac{1}{4}x^2\Big|\frac{1}{2}+\frac{1}{2}x^2\right)\left(\frac{1}{1-tx}\right)=
\frac{1}{2}\frac{1-x^2}{1+x^2-2tx}
\]
Since
\[
\sum_{n\geq0}T_n(t)x^n=\frac{1}{2}+\sum_{n\geq0}\tT_n(t)x^n
\]
we get the generating function
\[
\sum_{n\geq0}T_n(t)x^n=\frac{1-tx}{1+x^2-2tx}
\]
of the classical Chebyshev polynomials of the first kind.

Using the involved Riordan matrices we can find the known relation
between $T_n(x)$ and $U_n(x)$. Since
\[
T\left(\frac{1}{4}-\frac{1}{4}x^2\Big|\frac{1}{2}+\frac{1}{2}x^2\right)=
T\left(\frac{1}{2}-\frac{1}{2}x^2\Big|1\right)T\left(\frac{1}{2}\Big|\frac{1}{2}+\frac{1}{2}x^2\right)
\]
So, symbolically
\[
\left(
  \begin{array}{c}
\tT_0(x)\\
\tT_1(x)\\
\tT_2(x)\\
\tT_3(x)\\
\tT_4(x)\\
\tT_5(x)\\
\vdots\\
  \end{array}
\right)= \left(
  \begin{array}{ccccccc}
\frac{1}{2}&&&&&&\\
0&\frac{1}{2}&&&&&\\
-\frac{1}{2}&0&\frac{1}{2}&&&&\\
0&-\frac{1}{2}&0&\frac{1}{2}&&&\\
0&0&-\frac{1}{2}&0&\frac{1}{2}&&\\
0&0&0&-\frac{1}{2}&0&\frac{1}{2}&\\
\vdots&\vdots&\vdots&\vdots&\vdots&\vdots&\ddots\\
  \end{array}
\right) \left(
  \begin{array}{c}
U_0(x)\\
U_1(x)\\
U_2(x)\\
U_3(x)\\
U_4(x)\\
U_5(x)\\
\vdots\\
  \end{array}
\right)
\]

and consequently

\[\tT_n(x)=-\frac{1}{2}U_{n-2}(x)+\frac{1}{2}U_n(x)\]
 or
\[
2\tT_n(x)=U_n(x)-U_{n-2}(x)
\]
and then
\[
2T_n(x)=U_n(x)-U_{n-2}(x),\  n\geq 3
\]
\end{ejem}

As we noted in Section 4 of \cite{2ways}, if we delete the first
row and the first column in the Riordan matrix $T(f\mid g)$ we
obtain the new Riordan matrix
$\displaystyle{T\left(\frac{f}{g}\Big| g \right)}$. On the other
hand to add suitably a new column to the left of $T(f\mid g)$ one
place shifted up, and complete the new first row only with zeros
we have the Riordan matrix $T(fg\mid g )$. So deleting or adding
in the above sense any amount of rows and columns to $T(f\mid g)$
we obtain the intrisically related family of Riordan matrices
\[\cdots,T(g^3f\mid g),T(g^2f\mid g),T(gf\mid g),\mathbf{T(f\mid g)}, T(\frac{f}{g}\mid g), T(\frac{f}{g^2}\mid g),T(\frac{f}{g^3}\mid g),\cdots\]
We can easily obtain a recurrence to get the associated
polynomials to $\displaystyle{T\left(\frac{f}{g^n}\Big| g
\right)}$ in terms of that of $T(f\mid g )$. We have an analogous
conclusion on $T(fg^n\mid g )$ $n\geq0$. Anyway, once we know the
polynomial associated to $T(f\mid g )$ we can calculate that of
$T(fg^n\mid g )$ for $n\in\Z$.

\begin{prop}\label{P:recuOrbita}
Let  $\displaystyle{f=\sum_{n\geq0}f_nx^n}$,
$\displaystyle{g=\sum_{n\geq0}g_nx^n}$ be two power series such
that $f_0\neq0, \ g_0\neq0$. Suppose that $(p_n(x))_{n\in\N}$ is
the associated polynomial sequence of the Riordan array $T(f\mid g
)$, then

(a) If $(q_n(x))_{n\in\N}$ is the associated sequence to $T(fg\mid
g )$ we obtain
\[
q_n(x)=xp_{n-1}(x)+f_n \ \text{ if } \ n\geq1
\]
and $q_0(x)=f_0$.

(b) If $(r_n(x))_{n\in\N}$ is the associated polynomial sequence
to $\displaystyle{T\left(\frac{f}{g}\Big| g \right)}$ then
\[
r_{n-1}(x)=\frac{p_n(x)-p_n(0)}{x}\ \text { for } \ n\geq1
\]
\end{prop}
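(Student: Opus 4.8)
The plan is to exploit the explicit
description of how deleting or inserting a row-and-column in a
Riordan matrix transforms $T(f\mid g)$, together with the uniqueness
of the associated family of polynomials guaranteed by
Theorem~\ref{T:recu}. For part (a), the key observation recorded in
the paragraph preceding the statement is that $T(fg\mid g)$ is
obtained from $T(f\mid g)$ by adjoining a new left column (shifted
one place up, with coefficients of $f$) and a zero first row. More
usefully, I would read off the recurrence directly: since $T(fg\mid
g)$ is the Riordan matrix attached to the series $fg$ and $g$, its
associated polynomials $(q_n(x))$ satisfy the main recurrence
(\ref{E:main}) with $f$ replaced by $fg=\sum_{n\geq0}(fg)_n x^n$ but
with the \emph{same} $g$. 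The point is that the homogeneous part of
the recurrence (the terms involving $g_1,\dots,g_n$) is identical for
$(p_n)$ and $(q_n)$; only the inhomogeneous term changes from
$f_n/g_0$ to $(fg)_n/g_0$.

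\textbf{Part (a).} First I would write the recurrence for $(q_n(x))$
and for $x\,p_{n-1}(x)+f_n$ and check they agree, proceeding by
induction on $n$. Set $Q_n(x)=x\,p_{n-1}(x)+f_n$ for $n\geq1$ and
$Q_0(x)=f_0$. The claim is $q_n=Q_n$. Using that $(p_n)$ satisfies
(\ref{E:main}) with data $(f,g)$, I would substitute
$p_{n-1}=(Q_n-f_n)/x$ into the recurrence for $q_n$ and verify that
the $g$-convolution telescopes correctly; equivalently, it is cleaner
to compare generating functions. By the last displayed formula in the
excerpt,
$\sum_{n\geq0}p_n(t)x^n=\dfrac{f(x)}{g(x)-xt}$, and the analogous
identity for $T(fg\mid g)$ gives
$\sum_{n\geq0}q_n(t)x^n=\dfrac{f(x)g(x)}{g(x)-xt}$. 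Then
\[
\sum_{n\geq0}q_n(t)x^n
=\frac{f(x)\bigl(g(x)-xt\bigr)+xtf(x)}{g(x)-xt}
=f(x)+xt\sum_{n\geq0}p_n(t)x^n,
\]
and matching coefficients of $x^n$ yields $q_n(t)=f_n+t\,p_{n-1}(t)$
for $n\geq1$ and $q_0(t)=f_0$, which is exactly $q_n(x)=x\,p_{n-1}(x)+f_n$.

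\textbf{Part (b).} Here $T\!\left(\frac{f}{g}\mid g\right)$ is obtained
from $T(f\mid g)$ by deleting the first row and first column, as
stated just before the proposition. The generating-function route
works cleanly again: the associated polynomials $(r_n(x))$ of
$T\!\left(\frac{f}{g}\mid g\right)$ satisfy
$\sum_{n\geq0}r_n(t)x^n=\dfrac{f(x)/g(x)}{g(x)-xt}
=\dfrac{1}{g(x)}\sum_{n\geq0}p_n(t)x^n$, but it is simpler to argue at
the level of matrix entries. Deleting row $0$ and column $0$ sends the
entry $d_{n,k}$ of $T(f\mid g)$ to the entry $d_{n+1,k+1}$, so the
coefficient of $x^{j}$ in $r_{n-1}(x)$ is $d_{n,j+1}$, i.e.\
$r_{n-1}(x)=\sum_{j\geq0}d_{n,j+1}x^{j}
=\frac{1}{x}\sum_{k\geq1}d_{n,k}x^{k}
=\frac{1}{x}\bigl(p_n(x)-d_{n,0}\bigr)$. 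Since $d_{n,0}=p_n(0)$ is
the constant term of $p_n$, this is exactly
$r_{n-1}(x)=\dfrac{p_n(x)-p_n(0)}{x}$ for $n\geq1$.

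\textbf{Main obstacle.} The only delicate point is justifying the
column/row shift rigorously---that deleting the first row and column
of $T(f\mid g)$ produces precisely $T\!\left(\frac{f}{g}\mid g\right)$
(for (b)) and that prepending the $f$-column produces $T(fg\mid g)$
(for (a)). This is asserted in the excerpt and in \cite{2ways}, so I
would cite it rather than reprove it; granting it, both parts reduce
to the bookkeeping of re-indexing entries or, equivalently, to the
one-line generating-function manipulations above. I would present the
generating-function argument as the primary proof, since it sidesteps
any induction and makes the role of the factor $1/x$ and the constant
term $p_n(0)=d_{n,0}$ transparent.
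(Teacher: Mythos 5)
Your proof is correct, but it takes a genuinely different route from the paper's. The paper proves both parts with the Riordan-group product and umbral composition: writing $T(fg\mid g)=T(g\mid 1)T(f\mid g)$ for (a) and $T(g\mid 1)T\left(\frac{f}{g}\mid g\right)=T(f\mid g)$ for (b), it reads off $q_n(x)=\sum_{k=0}^n g_{n-k}p_k(x)$ (respectively $p_n(x)=\sum_{k=0}^n g_{n-k}r_k(x)$), then substitutes the recurrence of Theorem \ref{T:recu} into the top term so that the entire $g$-convolution cancels, leaving $q_n(x)=xp_{n-1}(x)+f_n$ (respectively $p_n(x)=xr_{n-1}(x)+p_n(0)$). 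You instead prove (a) by a one-line manipulation of the bivariate generating function $\sum_{n\geq0}p_n(t)x^n=f(x)/(g(x)-xt)$ --- legitimate, since that identity is established in the paper before this proposition, and all the arrays involved here are proper because $f_0g_0\neq0$ and $f_0/g_0\neq0$ --- and (b) by re-indexing matrix entries via the delete-row-and-column fact, which the paper itself quotes from \cite{2ways} in the paragraph preceding the statement, so citing it is fair. Both of your arguments are complete and sound (the unfinished induction sketch in (a) is superseded by the generating-function computation you designate as primary). The trade-off: the paper's proof stays entirely inside the recurrence and umbral-composition machinery it has just developed and never needs the deletion fact, whereas yours is shorter, makes the role of the constant term $p_n(0)=d_{n,0}$ transparent in (b), and the computation in (a) extends verbatim to the whole family $T(fg^n\mid g)$, $n\in\Z$, that the paper mentions just before the proposition.
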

\begin{proof}
(a) $T(fg\mid g )=T(g\mid 1)T(f\mid g )$. Using the umbral
composition we have
\[
q_n(x)=g_np_0(x)+g_{n-1}p_1(x)+\cdots+g_0p_n(x)
\]
Using now our Theorem \ref{T:recu} we obtain
\[
q_n(x)=g_np_0(x)+g_{n-1}p_1(x)+\cdots+g_0\left(\left(\frac{x-g_1}{g_0}\right)p_{n-1}(x)-\frac{g_2}{g_0}p_{n-2}(x)\cdots-\frac{g_{n}}{g_0}p_{0}(x)+\frac{f_{n}}{g_0}\right)
\]
consequently
\[
q_n(x)=xp_{n-1}(x)+f_n
\]

(b) Now $\displaystyle{T(g\mid 1)T\left(\frac{f}{g}\Big| g
\right)}=T(f\mid g )$. So
\[
p_n(x)=g_nr_0(x)+g_{n-1}r_1(x)+\cdots+g_0r_n(x)
\]
using again the Theorem \ref{T:recu} for the sequences $r_n(x)$ we
obtain
\[
p_n(x)=g_nr_0(x)+g_{n-1}r_1(x)+\cdots+g_0\left(\left(\frac{x-g_1}{g_0}\right)r_{n-1}(x)-\frac{g_2}{g_0}r_{n-2}(x)
\cdots-\frac{g_{n}}{g_0}r_{0}(x)+\frac{d_{n}}{g_0}\right)
\]
where the $d_n$ is the $n$-coefficient of the series
$\displaystyle{\frac{f}{g}}$. Consequently
$p_n(x)=xr_{n-1}(x)+d_n$. Note that $p_n(0)=d_n$, so
\[
r_{n-1}(x)=\frac{p_n(x)-p_n(0)}{x} \ \text{ if } \ n\geq1
\]
\end{proof}

\begin{cor}
Suppose $\displaystyle{g=\sum_{n\geq0}g_nx^n}$ with $g_0\neq0$.
Let $(p_n(x))_{n\in\N}$ be the polynomial sequence associated to
$T(1\mid g )$ and $(q_n(x))_{n\in\N}$  that associated to $T(g\mid
g )$. Then:
\[
q_n(x)=xp_{n-1}(x)\  \text{ for } \ n\geq1 \ \text{ and } \
q_0(x)=1
\]
\end{cor}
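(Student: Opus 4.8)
The plan is to recognize this corollary as the special case $f=1$ of Proposition \ref{P:recuOrbita}(a), so the whole argument reduces to a substitution. Concretely, I would set $f=\sum_{n\geq0}f_nx^n$ to be the constant series $f=1$, which means $f_0=1$ and $f_n=0$ for all $n\geq1$. Since $f_0=1\neq0$ and $g_0\neq0$ by hypothesis, the hypotheses of Proposition \ref{P:recuOrbita} are satisfied, so I may invoke it directly.

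The key identification is that the matrix $T(g\mid g)$ is precisely $T(fg\mid g)$ when $f=1$, because $fg=1\cdot g=g$. Hence the sequence $(p_n(x))_{n\in\N}$ associated to $T(1\mid g)$ and the sequence $(q_n(x))_{n\in\N}$ associated to $T(g\mid g)$ are exactly the two families related in part (a) of that proposition. Applying the conclusion of Proposition \ref{P:recuOrbita}(a) then yields $q_0(x)=f_0=1$ and, for $n\geq1$,
\[
q_n(x)=xp_{n-1}(x)+f_n=xp_{n-1}(x),
\]
since $f_n=0$ for every $n\geq1$. This gives both stated formulas at once.

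There is essentially no obstacle here: the only thing to verify is the bookkeeping that the constant series $f=1$ has the claimed coefficient sequence and that it meets the nonvanishing condition $f_0\neq0$ needed to apply the proposition. No separate induction or matrix computation is required, as all of that work is already carried out in the proof of Proposition \ref{P:recuOrbita}(a), which itself rests on the umbral-composition identity $T(fg\mid g)=T(g\mid1)T(f\mid g)$ together with Theorem \ref{T:recu}. Thus the corollary follows immediately.
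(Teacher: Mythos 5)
Your proof is correct and matches the paper's intent exactly: the corollary is stated immediately after Proposition \ref{P:recuOrbita} precisely as the specialization $f=1$ (so $f_0=1$, $f_n=0$ for $n\geq1$), which kills the $+f_n$ term in part (a) and gives $q_0(x)=1$, $q_n(x)=xp_{n-1}(x)$. The paper gives no separate proof, so your substitution argument is essentially the same route.
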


\begin{ejem}
As an application of Proposition \ref{P:recuOrbita} and as we
noted in Section \ref{S:ejem}, the relationships between both kind
of Morgan-Voyce polynomials are
\[
    B_n(x)-B_{n-1}(x)=b_{n}(x)
\]
\[
 b_n(x)-b_{n-1}(x)=xB_{n-1}(x)
\]
That in terms of Riordan arrays this means
\[
T(1-x|1)T(1|(1-x)^2)=T(1-x|(1-x)^2)
\]
\[
T(1-x|1)T(1-x|(1-x)^2)=T((1-x)^2|(1-x)^2)
\]
because $(T(1\mid (1-x)^2))$ gives rise to $(B_n(x))$ and
$T(1-x|(1-x)^2)$ gives rise to $(b_n(x))$
\end{ejem}

In the following expressions we consider $(p_n(x))$ as the family
of polynomials associated to $T(f\mid g)$, and we denote
$(q_n(x))$ the family of polynomials associated  to each of the
products of matrices, moreover $a, b$ are constant series with
$b\neq0$:

\[
T(a\mid1)T(f\mid g)=T(af\mid g), \ \text{then} \ q_n(x)=ap_n(x)
\]
\[
T(1\mid b)T(f\mid g)=T\left(f\left(\frac{x}{b}\right)\mid
bg\left(\frac{x}{b}\right)\right), \ \text{then} \
q_n(x)=\frac{1}{b^{n+1}}p_n(x)
\]
\[
T(f\mid g)T(a\mid 1)=T(af\mid g), \ \text{then} \ q_n(x)=ap_n(x)
\]
\[
T(f\mid g)T(1\mid b)=T(f\mid bg), \ \text{then} \
q_n(x)=\frac{1}{b}p_n\left(\frac{x}{b}\right)
\]

The above results can be summarized and extended in the following
way:

\begin{prop}\label{P:const}
Let $T(f\mid g)$ and $T(l\mid m)$ be two element of the Riordan
group. Suppose that $(p_n(x))$ and $(q_n(x))$ are the corresponding
associated families of polynomials. Suppose also that
\[
T(l\mid m)=T(\gamma\mid\alpha+\beta x)T(f\mid g)T(c\mid a+bx)
\]
where $\alpha, \gamma, a, c\neq0$. Then
\[
q_n(x)=\frac{\gamma c}{\alpha
a}\left(\sum_{k=0}^n\binom{n}{k}\left(-\frac{\beta}{\alpha}\right)^{n-k}\frac{1}{\alpha^k}p_k\left(\frac{x-b}{a}\right)\right)
\]
\end{prop}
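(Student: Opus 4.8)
The plan is to decompose the conjugating product $T(l\mid m)=T(\gamma\mid\alpha+\beta x)\,T(f\mid g)\,T(c\mid a+bx)$ into elementary pieces whose effect on the associated polynomial sequence is already recorded in the list of four basic identities preceding the statement, and then to compose those effects. First I would factor each of the two linear Riordan matrices as a product of the simple types appearing in that list: write $T(\gamma\mid\alpha+\beta x)=T(\gamma\mid1)\,T(1\mid\alpha)\,T\!\left(1\mid1+\tfrac{\beta}{\alpha}x\right)$ on the left, and correspondingly $T(c\mid a+bx)=T\!\left(1\mid1+\tfrac{b}{a}x\right)T(1\mid a)\,T(c\mid1)$ on the right, so that each factor is of one of the forms $T(a\mid1)$, $T(1\mid b)$, or $T(1\mid1+\lambda x)$ whose action on $(p_n)$ is known.

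The key computation is the effect of the single nontrivial factor $T(1\mid1+\lambda x)$ acting on the left. The plan is to apply the general displayed rule $T(1\mid b)T(f\mid g)=T\!\left(f(\tfrac{x}{b})\mid b\,g(\tfrac{x}{b})\right)$ in its parametric form, which gives $q_n(x)=\tfrac{1}{b^{n+1}}p_n(x)$ when $b$ is constant; for the $b=1+\lambda x$ case one instead reads the effect through the umbral composition of Proposition~\ref{P:relat1pn}, or more directly through the generating function $\sum_n p_n(t)x^n=\tfrac{f(x)}{g(x)-xt}$ from the final theorem. Multiplying $g$ by $1+\lambda x$ and substituting in that generating function produces, after expanding $(1+\lambda x)^{-(n+1)}$ by the binomial series, the weighted combination $\sum_k\binom{n}{k}\lambda^{\,n-k}p_k(x)$ up to the scalar powers of $\lambda$; this is precisely where the binomial coefficients $\binom{n}{k}$ and the factor $(-\beta/\alpha)^{n-k}$ in the target formula originate, with $\lambda=\beta/\alpha$.

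With the left factor handled, the remaining factors are routine scalings. The factor $T(\gamma\mid1)$ contributes the multiplicative constant $\gamma$, the factor $T(1\mid\alpha)$ on the left contributes the $\alpha^{-k}$ (the $\tfrac{1}{\alpha^k}$ inside the sum, after tracking how the index shifts through the binomial expansion), and on the right the factors $T(c\mid1)$, $T(1\mid a)$, $T(1\mid1+\tfrac{b}{a}x)$ contribute the overall constant $c$, the substitution $x\mapsto\tfrac{x}{a}$ with its prefactor $\tfrac1a$, and the shift $x\mapsto x-b$ — together producing the argument $\tfrac{x-b}{a}$ and the outer factor $\tfrac{\gamma c}{\alpha a}$. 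I would assemble these in the order left-to-right as dictated by the factorization, keeping careful track of which transformations act on the variable $x$ and which merely scale, and then verify the bookkeeping against the two-term special case $h(x)=a+bx$ noted in the Remark after Proposition~\ref{P:relat1pn}.

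The main obstacle will be the right factor $T(1\mid1+\tfrac{b}{a}x)$ combined with the scaling $T(1\mid a)$: multiplying $g$ on the right by a linear series $a+bx$ is a genuine change of rate, not a pure scalar, so it induces the affine substitution $x\mapsto\tfrac{x-b}{a}$ rather than a simple dilation, and one must confirm that the binomial expansion coming from the left factor and the substitution coming from the right factor do not interfere — i.e. that the $\binom{n}{k}(-\beta/\alpha)^{n-k}\alpha^{-k}$ weights attach to $p_k$ evaluated at the \emph{already} substituted argument $\tfrac{x-b}{a}$. The cleanest way to discharge this is to carry the whole argument at the level of the bivariate generating function $\tfrac{f(x)}{g(x)-xt}$, where each elementary factor acts by an explicit transparent substitution in $x$ and $t$, and only at the very end to read off the coefficient of $x^n$ and reinterpret $t$ as the polynomial variable; this sidesteps any index-chasing ambiguity in the convolution sums.
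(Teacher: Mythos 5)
Your strategy is genuinely different from the paper's and is workable, but not as written. The paper does no factoring at all: it computes the associated polynomials of the left factor directly from Theorem \ref{T:recu}, getting $s_n(x)=\frac{\gamma}{\alpha}\left(\frac{x-\beta}{\alpha}\right)^n$, quotes Proposition 14 of \cite{BanPas} for the right factor, $r_n(x)=\frac{c}{a}p_n\left(\frac{x-b}{a}\right)$, and then reads the triple product as the umbral composition $(q_n)=(s_n)\sharp(r_n)$, so the binomial coefficients come from expanding $\left(\frac{x-\beta}{\alpha}\right)^n$. In your version they instead come from expanding $(1+\lambda x)^{-(n+1)}$: indeed, since a product of Riordan matrices acts by composition of actions, the bivariate generating function of $T(1\mid 1+\lambda x)\,T(f\mid g)$ is $\sum_{n}p_n(t)\frac{x^n}{(1+\lambda x)^{n+1}}$, whence left multiplication by $T(1\mid 1+\lambda x)$ sends $(p_n)$ to $\left(\sum_{k}\binom{n}{k}(-\lambda)^{n-k}p_k\right)$; and your right-hand factorization $T(c\mid a+bx)=T\left(1\mid 1+\frac{b}{a}x\right)T(1\mid a)T(c\mid 1)$ is correct and recovers $\frac{c}{a}p_n\left(\frac{x-b}{a}\right)$ without citing \cite{BanPas}. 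So the skeleton of the argument is sound.

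The concrete error is your left factorization. These factors do not commute, and by the product rule $T(\gamma\mid1)T(1\mid\alpha)T\left(1\mid 1+\frac{\beta}{\alpha}x\right)=T\left(\gamma\mid\alpha+\frac{\beta}{\alpha}x\right)$, not $T(\gamma\mid\alpha+\beta x)$: the rate of the last factor gets evaluated at $x/\alpha$, which divides its linear coefficient by $\alpha$. Executed faithfully, your pipeline therefore computes the polynomials of $T\left(\gamma\mid\alpha+\frac{\beta}{\alpha}x\right)T(f\mid g)T(c\mid a+bx)$ and produces weights $\left(-\frac{\beta}{\alpha^2}\right)^{n-k}$ rather than $\left(-\frac{\beta}{\alpha}\right)^{n-k}$. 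The orderings that do give $T(\gamma\mid\alpha+\beta x)$ are $T(\gamma\mid1)T\left(1\mid 1+\frac{\beta}{\alpha}x\right)T(1\mid\alpha)$ (matching your choice $\lambda=\beta/\alpha$) or $T(\gamma\mid1)T(1\mid\alpha)T(1\mid 1+\beta x)$ (with $\lambda=\beta$); either one, with its three effects composed in the corresponding order, yields exactly the claimed $\frac{\gamma c}{\alpha a}\sum_{k}\binom{n}{k}\left(-\frac{\beta}{\alpha}\right)^{n-k}\frac{1}{\alpha^k}p_k\left(\frac{x-b}{a}\right)$, so the fix is only a reordering. A smaller point: Proposition \ref{P:relat1pn} and its Remark concern left multiplication by a Toeplitz factor $T(h\mid 1)$, so neither applies to the factor $T(1\mid 1+\lambda x)$ nor to the right factor $T(c\mid a+bx)$; your generating-function computation is the correct substitute for both, and should be stated as such rather than as a cross-check against that Remark.
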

\begin{proof}
Using Theorem \ref{T:recu} we have that if $(s_n(x))$ is the
family of polynomials associated to $T(\gamma\mid\alpha+\beta x)$
then
\[
s_0(x)=\frac{\gamma}{\alpha} \ \qquad \ \text{and} \ \qquad
s_n(x)=\left(\frac{x-\beta}{\alpha}\right)s_{n-1(x)} \ \forall n
\geq1
\]
consequently
\[
s_n(x)=\frac{\gamma}{\alpha}\left(\frac{x-\beta}{\alpha}\right)^n
\qquad n\in\N
\]

Proposition 14 in \cite{BanPas} says that if $(r_n(x))$ is the
family of polynomials associated to $T(f\mid g)T(c\mid a+bx)$ then
\[
r_n(x)=\frac{c}{a}p_n\left(\frac{x-b}{a}\right)
\]
Since $(q_n(x))=(s_n(x))\sharp(r_n(x))$ we obtain that
\[
(q_n(x))=\left(\frac{\gamma}{\alpha}\sum_{k=0}^n\binom{n}{k}
\left(-\frac{\beta}{\alpha}\right)^{n-k}\frac{1}{\alpha^k}x^k\right)\sharp
\left(\frac{c}{a}p_n\left(\frac{x-b}{a}\right)\right)
\]
Hence
\[
q_n(x)=\frac{\gamma c}{\alpha
a}\left(\sum_{k=0}^n\binom{n}{k}\left(-\frac{\beta}{\alpha}\right)^{n-k}\frac{1}{\alpha^k}p_k\left(\frac{x-b}{a}\right)\right)
\]
\end{proof}

\begin{ejem}
As we noted in Section \ref{S:ejem} the relation between the Pell
and the Fibonacci polynomials is $P_n(x)=F_n(2x)$. Recall
\[
T\left(\frac{1}{2}\Big|1\right)T(1|1-x^2)T\left(1\Big|\frac{1}{2}\right)=T\left(\frac{1}{2}\Big|\frac{1}{2}-\frac{1}{2}x^2\right)
\]
and
$\displaystyle{T\left(\frac{1}{2}\Big|\frac{1}{2}-\frac{1}{2}x^2\right)}$
gives rise to the Pell polynomials and $T(1\mid 1-x^2)$ gives rise
to the Fibonacci polynomials.
\end{ejem}

\begin{ejem} Recall that the Fermat polynomials are the polynomials given by $\mathcal{F}_0(x)=1$,
$\mathcal{F}_1(x)=3x$ and
\[
\mathcal{F}_{n}(x)=3x\mathcal{F}_{n-1}-2\mathcal{F}_{n-2} \ \text{
for } \ n\geq2
\]
Using our Theorem \ref{T:recu}, this means that Fermat polynomials
are the polynomials associated to the Riordan matrix
$\displaystyle{T\left(\frac{1}{3}\Big|\frac{1}{3}+\frac{2}{3}x^2\right)}$.
For this case, $g_0=\frac{1}{3}, \ g_1=0, \ g_2=\frac{2}{3}, \
g_n=0, \ \forall n\geq3$ and $f_0=\frac{1}{3}, \ f_n=0 \ \forall
n\geq1$. And the rule of construction of this triangle is:
$d_{n,k}=-2d_{n-2,k}+3d_{n-1,k-1}$ for $k>0$. The few first rows
are:
\[
\left(
  \begin{array}{c|cccccccc}
\frac{1}{3}&&&&&&\\
0&1&&&&&\\
0&0&3&&&&\\
0&-2&0&9&&&\\
0&0&-12&0&27&&\\
0&4&0&-54&0&81&\\
0&0&36&0&-216&0&243\\
0&-8&0&216&0&-810&0&729\\
\vdots&\vdots&\vdots&\vdots&\vdots&\vdots&\vdots&\vdots&\ddots\\
  \end{array}
\right)
\]
Consequently the few first Fermat polynomials are

\noindent $\mathcal{F}_0(x)=1$\\
\noindent $\mathcal{F}_1(x)=3x$\\
\noindent $\mathcal{F}_2(x)=-2+9x^2$\\
\noindent $\mathcal{F}_3(x)=-12x+27x^3$\\
\noindent $\mathcal{F}_4(x)=4-54x^2+81x^4$\\
\noindent $\mathcal{F}_5(x)=36x-216x^3+243x^5$\\
\noindent $\mathcal{F}_6(x)=-8+216x^2-810x^4+729x^6$\\


Since
\[
T\left(\frac{1}{3}\Big|\frac{1}{3}(1+2x^2)\right)=T\left(1\Big|\frac{1}{\sqrt{2}}\right)T\left(\frac{1}{2}\Big|\frac{1}{2}(1+x^2)\right)T\left(\frac{2}{3}\Big|\frac{2\sqrt{2}}{3}\right)
\]
and using Proposition \ref{P:const} we obtain the following relation
to the Chebysev polynomials of the second kind:
\[
\mathcal{F}_n(x)=(\sqrt{2})^{n}U_n\left(\frac{3x}{2\sqrt{2}}\right)
\]
Recently, it has been introduced by Boubaker et al. a special
family of polynomials in \cite{Boubaker}, \cite{otrosBoubaker}
related to the so called spray pyrolysis techniques. Now we are
going to find a relation of these polynomials with the Chebysev
polynomials of the second kind and then also with the Fermat
polynomials as showed above. This new sequences of  polynomials is
given by $\mathcal{B}_0(x)=1$, $\mathcal{B}_1(x)=x$,
$\mathcal{B}_2(x)=2+x^2$ and
\[
\mathcal{B}_{n}(x)=x\mathcal{B}_{n-1}(x)-\mathcal{B}_{n-2}(x) \
\text{ for } \ n\geq3
\]
Using our Theorem \ref{T:recu}, this means that
$\mathcal{B}_{n}(x)$ polynomials are the polynomials associated to
the Riordan matrix $\displaystyle{T\left(1+3x^2\mid
1+x^2\right)}$. For this case, $g_0=1, \ g_1=0, \ g_2=1, \ g_n=0,
\ \forall n\geq3$ and $f_0=1, f_1=0, f_2=3 \ f_n=0 \ \forall
n\geq3$. And the rule of construction of this triangle is:
$d_{n,k}=-d_{n-2,k}+d_{n-1,k-1}$, then
\[
\left(
  \begin{array}{c|cccccccc}
1&&&&&&\\
0&1&&&&&\\
3&0&1&&&&\\
0&2&0&1&&&\\
0&0&1&0&1&&\\
0&-2&0&0&0&1&\\
0&0&-3&0&-1&0&1\\
0&2&0&-3&0&-2&0&1\\
\vdots&\vdots&\vdots&\vdots&\vdots&\vdots&\vdots&\vdots&\ddots\\
  \end{array}
\right)
\]
Consequently the few first associated polynomials are

\noindent $\mathcal{B}_0(x)=1$\\
\noindent $\mathcal{B}_1(x)=x$\\
\noindent $\mathcal{B}_2(x)=2+x^2$\\
\noindent $\mathcal{B}_3(x)=x+x^3$\\
\noindent $\mathcal{B}_4(x)=-2+x^4$\\
\noindent $\mathcal{B}_5(x)=-3x-x^3+x^5$\\
\noindent $\mathcal{B}_6(x)=2-3x^2-2x^4+x^6$\\


with generating function
\[
\sum_{n\geq0}\mathcal{B}_n(t)x^n=T\left(1+3x^2\mid
1+x^2\right)\left(\frac{1}{1-xt}\right)=\frac{1+3x^2}{1-xt+x^2}
\]
 Since
\[
T\left(1+3x^2\mid 1+x^2\right)=T\left(1+3x^2\mid 1\right)
T\left(\frac{1}{2}\Big|\frac{1}{2}(1+x^2)\right)T(2\mid 2)
\]
and using Proposition \ref{P:relat1pn} and Proposition \ref{P:const}
we obtain the following relation to the Chebysev polynomials of the
second kind:
\[
\mathcal{B}_n(x)=U_{n}\left(\frac{x}{2}\right)+3U_{n-2}\left(\frac{x}{2}\right)\
\text{ for } \ n\geq2
\]
\end{ejem}

\section{Some applications to the generalized umbral calculus:
the associated polynomials and its recurrence relations.}

There are many other types of polynomial sequences in the literature
that can be constructed by means of Riordan arrays. We are going to
characterize by means of recurrences relations all the polynomial
sequences called \emph{generalized Appell polynomials} in Boas-Buck
\cite{Boas-Buck} page 17-18. We will follow their definitions there.

We first introduce some concepts. Suppose we have any polynomial
sequence $(p_n(x))_{n\in\N}$ with
$\displaystyle{p_n(x)=\sum_{k=0}^np_{n,k}x^k}$ and let
$h(x)=\sum_{n\geq0}h_{n}x^n$ any power series, we call the
\textit{Hadamard $h$-weighted sequence} generated by $(p_n(x))$ to
the sequence $\displaystyle{p_n^h(x)=(p_n\star h)(x)}$ where $\star$
means the Hadamard product of series. Recall that if
$f=\sum_{n\geq0}f_{n}x^n$ and $g=\sum_{n\geq0}g_{n}x^n$, then the
Hadamard product $f\star g$ is given by $f\star
g=\sum_{n\geq0}f_{n}g_nx^n$.

Note that $p_n^h$ is a polynomial for every $n\in\N$ and
$h\in\K[[x]]$. In fact $p_n^h(x)=\sum_{k=0}^np_{n,k}h_kx^k$.

Note also that the original definition of generalized Appell
polynomials defined by Boas-Buck in \cite{Boas-Buck} can be rewriten
in terms of Riordan matrices in the following way

\begin{prop} A sequence of polynomials $(s_n(x))$ is a family
of generalized Appell polynomials if and only if there are three
series $f, g, h\in \K[[x]]$, $f=\sum_{n\geq0}f_{n}x^n$,
$g=\sum_{n\geq0}g_{n}x^n$ and $h(x)=\sum_{n\geq0}h_{n}x^n$   with
$f_0,g_0\neq0$, and $h_n\neq0$ for all $n$ such that
\[
T(f\mid g)h(tx)=\sum_{n\geq0}s_n(t)x^n
\]
Moreover in this case, $\displaystyle{s_n(x)=p_n^h(x)}$ in the
above sense where $(p_n(x))$ is the associated polynomial sequence
of $T(f\mid g)$. Consequently
\[
\sum_{n\geq0}s_n(t)x^n=\sum_{n\geq0}(p_n\star h)(t)x^n=
\frac{f(x)}{g(x)}h\left(t\frac{x}{g(x)}\right)
\]
\end{prop}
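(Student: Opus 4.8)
The plan is to read the statement as a change of parameters between the Boas--Buck generating function and the Riordan form $T(f\mid g)h(tx)$, supplemented by one explicit coefficient identification for the \emph{moreover} clause. Throughout I will write $F(x)=\frac{f(x)}{g(x)}$ and $G(x)=\frac{x}{g(x)}$; since $f_0,g_0\neq0$, the series $F$ has order $0$ (it is a unit) and $G$ has order $1$ with nonzero linear coefficient $\frac{1}{g_0}$. Everything rests on the induced-action formula $T(f\mid g)(\phi(x))=\frac{f(x)}{g(x)}\phi\!\left(\frac{x}{g(x)}\right)$ recalled in the Introduction, applied to $\phi(x)=h(tx)$.

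I would first dispatch the \emph{moreover} and \emph{consequently} clauses, which form the computational core. Writing $h(tx)=\sum_{k\geq0}h_kt^kx^k$ as a series in $x$ with parametric coefficients $h_kt^k$, the action of the lower triangular matrix $T(f\mid g)=(p_{n,k})$ on this coefficient vector gives, in its $n$-th entry,
\[
[x^n]\,T(f\mid g)(h(tx))=\sum_{k=0}^{n}p_{n,k}h_kt^k=(p_n\star h)(t)=p_n^h(t),
\]
exactly as in the computation of $T(f\mid g)\!\left(\frac{1}{1-xt}\right)$ in the previous theorem; this yields $s_n=p_n^h$. On the other hand, the action formula gives $T(f\mid g)(h(tx))=\frac{f(x)}{g(x)}h\!\left(t\frac{x}{g(x)}\right)$ at once, which is the closed form in the final display.

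For the equivalence itself I would use the dictionary $(f,g)\leftrightarrow(A,H)$ given by $A=F=\frac{f}{g}$ and $H=G=\frac{x}{g}$, together with $\psi=h$. Recall that $(s_n(x))$ is generalized Appell exactly when $\sum_{n\geq0}s_n(t)x^n=A(x)\psi(tH(x))$ with $A$ of order $0$, $H$ of order $1$, and every coefficient of $\psi$ nonzero. The backward implication is then immediate: given $f,g,h$ as in the statement, the conditions $f_0,g_0\neq0$ and $h_n\neq0$ translate precisely into $A$ of order $0$, $H$ of order $1$, and $\psi_n\neq0$, so $\frac{f}{g}h\!\left(t\frac{x}{g}\right)$ is in Boas--Buck form. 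For the forward implication I would invert the dictionary: from $H$ of order $1$ solve $g=\frac{x}{H}$, a series of order $0$ with $g_0\neq0$, then set $f=Ag$ and $h=\psi$; here $A(0)\neq0$ together with $g_0\neq0$ forces $f_0\neq0$, while $\psi_n\neq0$ gives $h_n\neq0$, and by construction $\frac{f}{g}=A$, $\frac{x}{g}=H$, so that $T(f\mid g)(h(tx))=A(x)\psi(tH(x))=\sum_{n\geq0}s_n(t)x^n$.

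The step I expect to require the most care is the matching of normalizations: one must verify that \emph{order exactly $1$} for $H$ is what legitimizes the substitution $g=x/H$ and keeps $g_0\neq0$, and that $A(0)\neq0$ together with $g_0\neq0$ forces $f_0\neq0$, so that $T(f\mid g)$ is a proper Riordan array (an element of the Riordan group) rather than a general arithmetical triangle. Once this correspondence is seen to be a bijection preserving the relevant order and nonvanishing conditions, both implications close, and the coefficient computation above supplies simultaneously the identification $s_n=p_n^h$ and the explicit generating function.
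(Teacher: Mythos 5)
Your proposal is correct and follows essentially the same route as the paper: both rest on the induced-action formula $T(f\mid g)(h(tx))=\frac{f(x)}{g(x)}h\!\left(t\frac{x}{g(x)}\right)$ and the same dictionary $A=\frac{f}{g}$, $B=\frac{x}{g}$, $\Phi=h$, inverted as $g=\frac{x}{B}$, $f=Ag=\frac{xA}{B}$, to pass between the Riordan form and the Boas--Buck form. The only difference is that you spell out the coefficient computation establishing $s_n=p_n^h$ (and the bookkeeping that $B_1\neq0$ gives $g_0\neq0$ and $A_0g_0\neq 0$ gives $f_0\neq0$), details the paper leaves implicit.
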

\begin{proof}
If $\displaystyle{T(f\mid g)(h(tx))=\sum_{n\geq0}s_n(t)x^n}$ then
obviously $(s_n(x))$ is a generalized Appell sequence because
\[
\sum_{n\geq0}s_n(t)x^n=\frac{f(x)}{g(x)}h\left(t\frac{x}{g(x)}\right)
\]
Suppose now that $(s_n(x))$ is a generalized Appell sequence, then
there are three series $A, B, \Phi$ where
$\displaystyle{A=\sum_{n\geq0}A_nx^n}$, $A_0\neq0$,
$\displaystyle{B=\sum_{n\geq1}B_nx^n}$, $B_1\neq0$ and
$\displaystyle{\Phi=\sum_{n\geq0}\Phi_nx^n}$ with $\Phi_n\neq0, \
\forall n\in \N$ such that
\[
\sum_{n\geq0}s_n(t)x^n=A(x)\Phi(tB(x))
\]
If we take $\Phi=h$, $\displaystyle{g(x)=\frac{x}{B(x)}}$ and
$\displaystyle{f(x)=\frac{xA(x)}{B(x)}}$ we are done.
\end{proof}

\begin{rmk}
Note that if $\displaystyle{h(x)=\frac{1}{1-x}}$ the family of
$(p_n^{\frac{1}{1-x}}(x))$ is exactly the associated polynomials
$(p_n(x))$ of $T(f\mid g)$, because $\displaystyle{\frac{1}{1-x}}$
is the neutral element in the Hadamard product.
\end{rmk}

\begin{ejem} \textbf{The Sheffer polynomials. } Following the previous
proposition we have that $(S_n(x))$ is a Sheffer sequence if and
only if there is a Riordan matrix $T(f\mid g)$ such that
\[
T(f\mid g)(e^{tx})=\sum_{n\geq0}S_n(t)x^n
\]
The usual way to introduce Sheffer sequences is by means of the
corresponding generating function
\[ \sum_{n\geq0}S_{n}(t)x^n=A(x)e^{tH(x)}\]
where $\displaystyle{A=\sum_{n\geq0}A_nx^n, H=\sum_{n\geq1}H_nx^n}$
with $A_0\neq0$, $H_1\neq0$. Note that for this case the
corresponding Riordan matrix is
\[
T\left(\frac{xA(x)}{H(x)}\Big| \frac{x}{H(x)}\right)
\]
The general term of a Sheffer sequence, $S_n(x)$ is given by
\[
S_n(x)=p_n(x)\star e^x
\]
where $(p_n(x))$ are the associated polynomials to $T(f\mid g)$.
Consequently
\[
\ S_n(x)=\sum_{k=0}^n \frac{p_{n,k}}{k!}x^k
\]
if $\displaystyle{p_n(x)=\sum_{k=0}^np_{n,k}x^k}$.

 \textbf{WARNING} Note that in many places \cite{Roman},
\cite{Roman-Rota}, \cite{Rota2} they call a Sheffer sequence to
the sequence $(n!S_n(x))_{n\in\N}$ where $(S_n(x))_{n\in\N}$ is
our Sheffer sequence.
\end{ejem}

In the following example we can note that applying a fixed
$T(f\mid 1)$ to different series $h$ gives rise to some classical
families of polynomials.

\begin{ejem}\textbf{The Brenke polynomials}
Following \cite{Boas-Buck}, $(B_n(x))$ is in  the class of Brenke
polynomials if
\[
T(f\mid 1)(h(tx))=\sum_{n\geq0}B_n(t)x^n
\]
Some particular cases are:
\[
T(f\mid
1)\left(\frac{1}{1-tx}\right)=\sum_{n\geq0}T^{\ast}_n(t)x^n
\]
where ($T^{\ast}_n)$ are the reversed Taylor polynomial of $f$.
\[
T(f\mid 1)(e^{tx})=\sum_{n\geq0}A_n(t)x^n
\]
where $(A_n(x))$ are the Appell polynomials of $f$.
\end{ejem}

Using analogous arguments as in the previous section for
polynomials of Riordan type, we can get some relationships between
some classical Sheffer sequences once we know, easily, some
relation between their corresponding Riordan matrices.

\begin{ejem}
\textbf{Pidduck and Mittag-Leffler polynomials. } Consider the
sequence $(P_n(x))$ satisfying
\[
\sum_{n\geq0}\mathcal{P}_n(t)x^n=T\left(\frac{x}{(1-x)\log\left(\frac{1+x}{1-x}
\right)}\Big|\frac{x}{\log\left(\frac{1+x}{1-x}
\right)}\right)(e^{tx})
\]
in matricial form:
\[
\left(%
\begin{array}{cccccc}
  1 & 0 & 0 & 0 & 0 & \cdots\\
  1 & 2 & 0 & 0 & 0 & \cdots\\
  1 & 2 & 4 & 0 & 0 & \cdots\\
  1 & \frac{8}{3} & 4 & 8 & 0 & \cdots\\
  1 & \frac{8}{3} & \frac{20}{3} & 8 & 16 & \cdots\\
  \vdots & \vdots & \vdots & \vdots & \vdots & \ddots\\
\end{array}%
\right) \left(
 \begin{array}{c}
 1\\
 t\\
 \frac{t^2}{2}\\
 \frac{t^3}{6}\\
 \frac{t^4}{24}\\
 \vdots
  \end{array}
 \right)=
 \left(
 \begin{array}{c}
 1\\
 2t+1\\
 2t^2+2t+1\\
 \frac{4}{3}t^3+2t^2+\frac{8}{3}t+1\\
 \frac{2}{3}t^4+\frac{4}{3}t^3+\frac{10}{3}t^2+\frac{8}{3}t+1\\
 \vdots
  \end{array}
 \right)
\]
If we take $\widetilde{P}_n(x)=n!\mathcal{P}_n(x)$, then
$\widetilde{P}_n(x)$ are the usual Pidduck polynomials.
\[
\begin{array}{c}
  \widetilde{P}_0(x)=1 \\
  \widetilde{P}_1(x)=2x+1 \\
  \widetilde{P}_2(x)=4x^2+4x+2 \\
  \widetilde{P}_3(x)=8x^3+12x^2+16x+6 \\
  \widetilde{P}_4(x)=16t^4+32x^3+80x^2+64x+24
\end{array}
\]
On the other hand we get the Mittag-Leffler polynomials, in the
following way. If $(M_n(x))$ is given by the formula:
\[
\sum_{n\geq0}M_n(t)x^n=T\left(\frac{x}{\log\left(\frac{1+x}{1-x}
\right)}\Big|\frac{x}{\log\left(\frac{1+x}{1-x}
\right)}\right)(e^{tx})
\]
in matricial form:
\[
\left(%
\begin{array}{cccccc}
  1 & 0 & 0 & 0 & 0 & \cdots\\
  0 & 2 & 0 & 0 & 0 & \cdots\\
  0 & 0 & 4 & 0 & 0 & \cdots\\
  0 & \frac{2}{3} & 0 & 8 & 0 & \cdots\\
  0 & 0 & \frac{8}{3} & 0 & 16 & \cdots\\
  \vdots & \vdots & \vdots & \vdots & \vdots & \ddots\\
\end{array}%
\right) \left(
 \begin{array}{c}
 1\\
 t\\
 \frac{t^2}{2}\\
 \frac{t^3}{6}\\
 \frac{t^4}{24}\\
 \vdots
  \end{array}
 \right)=
 \left(
 \begin{array}{c}
 1\\
 2t\\
 2t^2\\
 \frac{4}{3}t^3+\frac{2}{3}t\\
 \frac{2}{3}t^4+\frac{4}{3}t^2\\
 \vdots
  \end{array}
 \right)
\]
then, if we take now $\widetilde{M}_n(x)=n!M_n(x)$, then
$\widetilde{M}_n(x)$ are the usual Mittag-Leffler polynomials.
\[
\begin{array}{c}
  \widetilde{M}_0(x)=1 \\
  \widetilde{M}_1(x)=2x \\
  \widetilde{M}_2(x)=4x^2 \\
  \widetilde{M}_3(x)=8x^3+4x \\
  \widetilde{M}_4(x)=16t^4+32x^2
\end{array}
\]

Both families of polynomials are related because:
\[
T\left(\frac{x}{(1-x)\log\left(\frac{1+x}{1-x}
\right)}\Big|\frac{x}{\log\left(\frac{1+x}{1-x}
\right)}\right)=T\left(\frac{1}{1-x}\Big|1\right)T\left(\frac{x}{\log\left(\frac{1+x}{1-x}
\right)}\Big|\frac{x}{\log\left(\frac{1+x}{1-x} \right)}\right)
\]
Hence
\[
\left(%
\begin{array}{c}
  \mathcal{P}_0(x) \\
  \mathcal{P}_1(x) \\
  \mathcal{P}_2(x) \\
  \mathcal{P}_3(x) \\
  \mathcal{P}_4(x) \\
  \vdots \\
\end{array}%
\right)=\left(%
\begin{array}{cccccc}
  1 & 0 & 0 & 0 & 0 & \cdots \\
  1 & 1 & 0 & 0 & 0 & \cdots \\
  1 & 1 & 1 & 0 & 0 & \cdots \\
  1 & 1 & 1 & 1 & 0 & \cdots \\
  1 & 1 & 1 & 1 & 1 & \cdots \\
  \vdots & \vdots & \vdots & \vdots & \vdots & \ddots \\
\end{array}%
\right)\left(%
\begin{array}{c}
  M_0(x) \\
  M_1(x) \\
  M_2(x) \\
  M_3(x) \\
  M_4(x) \\
  \vdots \\
\end{array}%
\right)
\]
So
\[
\mathcal{P}_n(x)=\sum_{k=0}^nM_k(x)\qquad  \text{or equivalently}
\qquad
\widetilde{P}_n(x)=\sum_{k=0}^n\binom{n}{k}(n-k)!\widetilde{M}_k(x)
\]
\end{ejem}

Using our main theorem in Section \ref{S:main} we can obtain the
following recurrence relations for the generalized Appell
polynomials, which is the main result in this section.

\begin{thm}\label{T:Tfgh}
Let $(s_n(x))_{n\in\N}$ be a sequence of polynomials with
$\displaystyle{s_n(x)=\sum_{k=0}^ns_{n,k}x^k}$. Then
$(s_n(x))_{n\in\N}$ is a family of generalized Appell polynomials
if and only if there are three sequences $(f_n)$, $(g_n)$,
$(h_n)\in \K$ with $f_0,g_0\neq0$ and $h_n\neq0 \ \forall n\in\N$
such that
\[
s_n(x)=\frac{1}{g_0}(xs_{n-1}(x)\star
\widehat{h}(x))-\frac{g_1}{g_0}s_{n-1}(x)-\cdots-\frac{g_n}{g_0}s_0(x)+\frac{h_0f_n}{g_0}\
\quad \forall n\in \N \ \quad \text{with} \ \quad
s_0(x)=\frac{h_0f_0}{g_0}
\]
where
$\displaystyle{\widehat{h}(x)=\sum_{k=1}^{\infty}\frac{h_k}{h_{k-1}}x^k}$.
Moreover the coefficients of this family of polynomials satisfy
the following recurrence: \noindent If $k\geq1$
\[
s_{n,k}=-\frac{g_1}{g_0}s_{n-1,k}-\cdots-\frac{g_n}{g_0}s_{0,k}+\frac{h_k}{h_{k-1}}s_{n-1,k-1}
\]
If $k=0$
\[
s_{n,0}=-\frac{g_1}{g_0}s_{n-1,0}-\cdots-\frac{g_n}{g_0}s_{0,0}+\frac{h_0f_n}{g_0},\qquad
s_{0,0}=\frac{h_0f_0}{g_0}
\]
\end{thm}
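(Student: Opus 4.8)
The plan is to reduce the whole statement to the preceding Proposition (characterizing generalized Appell sequences as Hadamard-weighted Riordan sequences) together with Theorem \ref{T:recu} and Algorithm \ref{A:algo}. The key observation is that a generalized Appell sequence is nothing but $s_n = p_n \star h$, where $(p_n(x))$ is the associated polynomial sequence of a Riordan matrix $T(f\mid g)$ with $f_0,g_0\neq 0$ and $h=\sum_{n\geq 0}h_nx^n$ has all $h_n\neq 0$. At the level of coefficients this reads $s_{n,k}=p_{n,k}h_k$, and because every $h_k\neq 0$ this relation is invertible, namely $p_{n,k}=s_{n,k}/h_k$. So the theorem will follow once I transport the coefficient recurrence for $(p_{n,k})$ furnished by Algorithm \ref{A:algo} into one for $(s_{n,k})$, and back.

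For the direct implication I would begin from the rules in Algorithm \ref{A:algo}, that is $p_{n,k}=-\frac{g_1}{g_0}p_{n-1,k}-\cdots-\frac{g_n}{g_0}p_{0,k}+\frac{1}{g_0}p_{n-1,k-1}$ for $k\geq 1$, together with the analogous rule involving $f_n$ for $k=0$. Multiplying the $k\geq 1$ rule by $h_k$ and replacing each $p_{j,k}h_k$ by $s_{j,k}$, the only term needing attention is the shift term, which I rewrite as
\[
\frac{1}{g_0}p_{n-1,k-1}h_k=\frac{1}{g_0}\frac{h_k}{h_{k-1}}\bigl(p_{n-1,k-1}h_{k-1}\bigr)=\frac{1}{g_0}\frac{h_k}{h_{k-1}}s_{n-1,k-1}.
\]
This step is legitimate precisely because $h_{k-1}\neq 0$, and it produces the stated coefficient recurrence for $s_{n,k}$ with weight $h_k/h_{k-1}$; multiplying the $k=0$ rule by $h_0$ gives the $s_{n,0}$ recurrence at once, with constant term $h_0f_n/g_0$.

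To pass to the polynomial form I would sum $s_{n,k}x^k$ over $k$. The terms $-\frac{g_i}{g_0}s_{n-i,k}$ assemble into $-\frac{g_i}{g_0}s_{n-i}(x)$, and the constant $h_0f_n/g_0$ arises only from $k=0$. The remaining contribution $\sum_{k\geq 1}\frac{1}{g_0}\frac{h_k}{h_{k-1}}s_{n-1,k-1}x^k$ is exactly $\frac{1}{g_0}$ times the Hadamard product of $xs_{n-1}(x)=\sum_{k\geq 1}s_{n-1,k-1}x^k$ with $\widehat{h}(x)=\sum_{k\geq 1}\frac{h_k}{h_{k-1}}x^k$. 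This identification is the whole point of the definition of $\widehat{h}$, and it is the step I expect to demand the most care: one must match coefficients index by index and observe that the Hadamard term contributes nothing in degree $0$, since $\widehat{h}$ has no constant term. This yields the displayed main recurrence.

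For the converse I would simply run the computation backwards. Given $(s_n(x))$ satisfying the recurrence, I set $p_{n,k}:=s_{n,k}/h_k$, which is well defined as $h_k\neq 0$; substituting $s_{n,k}=p_{n,k}h_k$ into the coefficient recurrence and dividing by $h_k$ (respectively $h_0$) recovers verbatim the recurrence of Theorem \ref{T:recu} for $(p_{n,k})$. Hence $(p_n(x))$ is the associated sequence of a Riordan matrix $T(f\mid g)$ with $f_0,g_0\neq 0$, and since $s_n=p_n\star h$ with every $h_n\neq 0$, the preceding Proposition certifies that $(s_n(x))$ is a family of generalized Appell polynomials. The only genuine subtlety throughout is the Hadamard-product bookkeeping in the forward direction; everything else is invertible linear manipulation secured by the hypothesis $h_n\neq 0$ for all $n$.
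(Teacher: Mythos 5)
Your proposal is correct and is essentially the paper's own proof: both arguments reduce the statement via the preceding Proposition to the identity $s_{n,k}=p_{n,k}h_k$, where $(p_n(x))$ is the sequence associated to $T(f\mid g)$, then invoke Theorem \ref{T:recu}/Algorithm \ref{A:algo} and exploit the rescaling $h_k=\frac{h_k}{h_{k-1}}h_{k-1}$ --- the paper merely performs this bookkeeping at the level of series (Hadamard-multiplying the recurrence by $h$, resp.\ by $h^{(-1)^{\star}}$, via the identity $xp_{n-1}(x)\star h(x)=(xs_{n-1}(x))\star\widehat{h}(x)$) where you perform it entrywise and then resum. One caveat: your shift term correctly comes out as $\frac{1}{g_0}\frac{h_k}{h_{k-1}}s_{n-1,k-1}$, which is consistent with the displayed polynomial recurrence, with the paper's own proof, and with its Laguerre example (where $g_0=-1$), so the ``moreover'' display in the statement, which omits the factor $\frac{1}{g_0}$, is a typo in the paper; your claim that this ``produces the stated coefficient recurrence'' therefore glosses over a discrepancy you should have flagged rather than asserted as a verbatim match.
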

\begin{proof}
If $(s_n(x))$ is a family of generalized Appell polynomials then
there are three sequence $(f_n)$, $(g_n)$, $(h_n)$ of elements in
$\K$ with $f_0,g_0\neq0$ and $h_n\neq0 \ \forall n\in\N$, such
that if $\displaystyle{f=\sum_{n\geq0}f_nx^n,
g=\sum_{n\geq0}g_nx^n}$ and $\displaystyle{h=\sum_{n\geq0}h_nx^n}$
then
\[
T(f\mid g)h(tx)=\sum_{n\geq0}s_n(t)x^n
\]
since  $\displaystyle{s_n(x)=p_n^h(x)=p_n(x)\star h(x)}$, the
family of polynomials $(p_n(x))$ associated to $T(f\mid g)$ obeys
the recurrence relation of Theorem \ref{T:recu}: Using the
distributivity of Hadamard product we get
\[
p_n(x)\star h(x)=\left(\frac{x-g_1}{g_0}\right)p_{n-1}(x)\star
h(x)- \frac{g_2}{g_0}p_{n-2}(x)\star
h(x)\cdots-\frac{g_{n}}{g_0}p_{0}(x)\star
h(x)+\frac{f_{n}}{g_0}\star h(x)=
\]
\[
p_n^h(x)=\frac{x}{g_0}p_{n-1}(x)\star
h(x)-\frac{g_1}{g_0}p_{n-1}^h(x)-
\frac{g_2}{g_0}p_{n-2}^h(x)\cdots-\frac{g_{n}}{g_0}p_{0}^h(x)+\frac{f_{n}h_0}{g_0}
\]
since
\[
xp_{n-1}(x)\star
h(x)=p_{n-1,0}h_1x+p_{n-1,1}h_2x^2+\cdots+p_{n-1,n-1}h_{n}x^{n}
\]
then
\[
xp_{n-1}(x)\star
h(x)=p_{n-1,0}h_0\frac{h_1}{h_0}x+p_{n-1,1}h_1\frac{h_2}{h_1}x^2+
\cdots+p_{n-1,n-1}h_{n-1}\frac{h_n}{h_{n-1}}x^{n}=xp^h_{n-1}(x)\star
\widehat{h}(x)
\]
so we get the result.

On the other hand if there are three sequences $(f_n)$, $(g_n)$,
$(h_n)\in \K$ with $f_0,g_0\neq0$ and $h_n\neq0 \ \forall n\in\N$
such that
\[
s_n(x)=\frac{1}{g_0}(xs_{n-1}(x)\star
\widehat{h}(x))-\frac{g_1}{g_0}s_{n-1}(x)-\cdots-\frac{g_n}{g_0}s_0(x)+\frac{h_0f_n}{g_0}\
\quad \forall n\in \N \ \quad \text{with} \ \quad
s_0(x)=\frac{h_0f_0}{g_0}
\]
where
$\displaystyle{\widehat{h}(x)=\sum_{k=1}^{\infty}\frac{h_k}{h_{k-1}}x^k}$.
Let
\[
p_n(x)=s_n(x)\star h^{(-1)^{\star}}(x)
\]
where
$\displaystyle{h^{(-1)^{\star}}(x)=\sum_{n\geq0}\frac{1}{h_n}x^n}$.
Then
\[
s_n(x)\star h^{(-1)^{\star}}(x)=\frac{1}{g_0}(xs_{n-1}(x)\star
\widehat{h}(x))\star
h^{(-1)^{\star}}(x)-\frac{g_1}{g_0}s_{n-1}(x)\star
h^{(-1)^{\star}}(x)-\cdots-\frac{g_n}{g_0}s_0(x)\star
h^{(-1)^{\star}}(x)+\frac{h_0f_n}{g_0}\star h^{(-1)^{\star}}(x)
\]
\[
p_n(x)=\frac{1}{g_0}(xs_{n-1}(x)\star \widehat{h}(x))\star
h^{(-1)^{\star}}-\frac{g_1}{g_0}p_{n-1}(x)-\cdots-\frac{g_n}{g_0}p_0(x)
+\frac{f_n}{g_0}
\]
since
\[
xs_{n-1}(x)\star
\widehat{h}(x)=s_{n-1,0}\frac{h_1}{h_0}x+s_{n-1,1}\frac{h_2}{h_1}x^2+\cdots+
s_{n-1,n-1}\frac{h_n}{h_{n-1}}x^n
\]
then
\[
xs_{n-1}(x)\star \widehat{h}(x)\star
h^{(-1)^{\star}}(x)=xs_{n-1}(x)\star
h^{(-1)^{\star}}(x)=xp_{n-1}(x)
\]
consequently
\[
p_n(x)=\frac{1}{g_0}(xp_{n-1}(x))-\frac{g_1}{g_0}p_{n-1}(x)-\cdots-\frac{g_n}{g_0}p_0(x)
+\frac{f_n}{g_0}
\]
so $(p_n(x))$ obeys Theorem \ref{T:recu} and then $(p_n(x))$ is
the associated polynomials to $T(f\mid g)$. Hence $(s_n(x))$ is a
family of generalized Appell polynomials.

The second part of the result is an easy consequence of our
Algorithm \ref{A:algo} in the Introduction.
\end{proof}
%
%

\begin{rmk}
Note that if $k\geq1$, some terms in the recurrence are null, in
fact $s_{l,k}=0$ if $l< k$. Consequently:
\[
s_{n,k}=-\frac{g_1}{g_0}s_{n-1,k}-\cdots-\frac{g_{n-k}}{g_0}s_{k,k}+\frac{h_k}{h_{k-1}}s_{n-1,k-1}
\]
\end{rmk}

A consequence that we can obtain from the recurrence relation for
the generalized Appell sequences is the following relation between
the Hadamard $h$-weighted and $h'$-weighted sequences for a
polynomials sequence of Riordan type. For notational convenience
we represent now by $\mathcal{D}(\alpha)$ to the derivative of any
series $\alpha$. The result obtained below  when we consider the
classical Appell sequences, is just what Appell took as the
definition for these classical sequences.
\begin{cor}\label{C:DerPn}
Let $T(f\mid g)$ be any element of the Riordan group with
$\displaystyle{f=\sum_{n\geq0}f_nx^n}$,
$\displaystyle{g=\sum_{n\geq0}g_nx^n}$, and with associated
sequence $(p_n(x))$. Suppose that $h\in\K[[x]]$ is Hadamard
invertible. Then the $\mathcal{D}(h)$ is Hadamard invertible and
\[
p_{n-1}^{\mathcal{D}(h)}(x)=\sum_{k=0}^{n}g_k\mathcal{D}(p_{n-k}^h)(x)
\]
\end{cor}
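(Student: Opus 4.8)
The plan is to separate the two assertions. The first claim is immediate: the coefficient of $x^m$ in $\mathcal{D}(h)$ is $(m+1)h_{m+1}$, and since $h$ is Hadamard invertible we have $h_{m+1}\neq0$, while characteristic zero forces $m+1\neq0$; hence every coefficient of $\mathcal{D}(h)$ is nonzero, so $\mathcal{D}(h)$ is Hadamard invertible. The real content is the displayed identity, and I would prove it by comparing coefficients of $x^j$ on both sides, writing $p_m(x)=\sum_k p_{m,k}x^k$ for the entries of $T(f\mid g)$.

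On the left, $p_{n-1}^{\mathcal{D}(h)}=p_{n-1}\star\mathcal{D}(h)$ and the $j$-th coefficient of $\mathcal{D}(h)$ is $(j+1)h_{j+1}$, so the coefficient of $x^j$ equals $(j+1)h_{j+1}\,p_{n-1,j}$. On the right, differentiating $p_{n-k}^h(x)=\sum_i p_{n-k,i}h_i x^i$ gives the coefficient $(j+1)p_{n-k,j+1}h_{j+1}$ at $x^j$, so the coefficient of $x^j$ in $\sum_{k=0}^n g_k\mathcal{D}(p_{n-k}^h)$ is $(j+1)h_{j+1}\sum_{k=0}^n g_k p_{n-k,j+1}$. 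The crucial point is that the factor $(j+1)h_{j+1}$ occurs on both sides and is nonzero, so after cancelling it the whole identity reduces to the purely matricial relation
\[
p_{n-1,j}=\sum_{k=0}^n g_k\,p_{n-k,j+1}.
\]

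This last relation is exactly the column-shift rule for the entries of the Riordan matrix $T(f\mid g)$: rewriting the recurrence of Algorithm \ref{A:algo} (equivalently Theorem \ref{T:recu}) for the column index $j+1\geq1$ and clearing the denominator $g_0$ yields $\sum_{k=0}^n g_k p_{n-k,j+1}=p_{n-1,(j+1)-1}$, which is precisely the right-hand side above; the boundary cases $j\geq n-1$ are handled by lower-triangularity and by the diagonal rule $g_0p_{n,n}=p_{n-1,n-1}$, so the coefficient identity holds for every $j\geq0$ and, both members being polynomials, summing over $j$ gives the stated equality. A shorter alternative is to differentiate with respect to $t$ the generating-function formula $\sum_{n\geq0}p_n^h(t)x^n=\frac{f(x)}{g(x)}h\left(t\frac{x}{g(x)}\right)$ established in the proposition above, which produces
\[
\sum_{n\geq0}\mathcal{D}(p_n^h)(t)x^n=\frac{x}{g(x)}\sum_{n\geq0}p_n^{\mathcal{D}(h)}(t)x^n;
\]
multiplying by $g(x)$ and reading off the coefficient of $x^n$ delivers the claim at once. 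I expect the only genuine obstacle to be the index bookkeeping in the differentiation step: one must verify that taking the derivative lowers the degree by one and extracts the common factor $(j+1)h_{j+1}$ uniformly on both sides, so that it cancels and leaves the bare Riordan recurrence; once this is observed the result follows immediately.
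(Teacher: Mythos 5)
Your proposal is correct, and both of your arguments are genuinely different from the paper's. The paper proves this corollary by differentiating the recurrence of Theorem \ref{T:Tfgh} (the one involving $\widehat{h}$), rearranging to isolate $\sum_{k=0}^n g_k\mathcal{D}(p^h_{n-k})$, and then invoking two Hadamard identities, namely $\mathcal{D}(m\star l)=\frac{m(x)-m(0)}{x}\star\mathcal{D}(l)$ and $h\star\mathcal{D}(\widehat{h})=\mathcal{D}(h)$, to recognize the other side as $p_{n-1}\star\mathcal{D}(h)=p_{n-1}^{\mathcal{D}(h)}$. You bypass Theorem \ref{T:Tfgh} and the auxiliary series $\widehat{h}$ entirely: after cancelling the common nonzero factor $(j+1)h_{j+1}$ (your justification of its nonvanishing, via Hadamard invertibility and characteristic zero, is exactly right and also settles the first assertion, which the paper leaves implicit), the identity collapses to the bare column recurrence $p_{n-1,j}=\sum_{k=0}^n g_k\,p_{n-k,j+1}$ of Algorithm \ref{A:algo}, with the boundary cases $j\geq n-1$ correctly disposed of by triangularity and the diagonal rule. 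This is more elementary and makes transparent that the corollary is nothing but a reweighting of the defining Riordan recurrence; the paper's route, by contrast, exhibits it as a formal consequence of its recurrence theorem for generalized Appell sequences, which is the narrative the section is building. Your generating-function alternative is also valid and arguably the slickest of the three: differentiating $\sum_{n\geq0}p_n^h(t)x^n=\frac{f(x)}{g(x)}h\bigl(t\frac{x}{g(x)}\bigr)$ in $t$, recognizing the right-hand side as $\frac{x}{g(x)}\sum_{n\geq0}p_n^{\mathcal{D}(h)}(t)x^n$ via the same proposition applied to $\mathcal{D}(h)$, then multiplying by $g(x)$ and equating coefficients of $x^n$ gives the identity in three lines, at the cost of working with bivariate formal series rather than finitely many matrix entries.
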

\begin{proof}
We know that
\[
p_n^h(x)=\frac{1}{g_0}(xp_{n-1}^h(x)\star
\widehat{h}(x))-\frac{g_1}{g_0}p^h_{n-1}(x)-\cdots-\frac{g_n}{g_0}p^h_0(x)+\frac{h_0f_n}{g_0}
\]
Applying the derivative in both sides we obtain
\[
\mathcal{D}(p_n^h)(x)=\frac{1}{g_0}\mathcal{D}(xp_{n-1}^h(x)\star
\widehat{h}(x))-\sum_{k=1}^n\frac{g_k}{g_0}\mathcal{D}(p^h_{n-k})(x)
\]
Consequently
\[
\mathcal{D}(xp_{n-1}^h(x)\star
\widehat{h}(x))=\sum_{k=0}^ng_k\mathcal{D}(p^h_{n-k})(x)
\]
It is easy to prove that
\[
\mathcal{D}(m(x)\star
l(x))=\frac{m(x)-m(0)}{x}\star\mathcal{D}(l(x))=
\mathcal{D}(m(x))\star\frac{(l(x)-l(0)}{x}
\]
for any series $l,m\in\K[[x]]$. Using the first equality above we
get
\[
p_{n-1}^h\star\mathcal{D}(\widehat{h})(x)=\sum_{k=0}^ng_k\mathcal{D}(p_{n-k}^h)(x)
\]
but
\[
(p_{n-1}(x)\star
h(x))\star\mathcal{D}(\widehat{h})(x)=p_{n-1}(x)\star (h(x)\star
(\mathcal{D}(\widehat{h})(x))
\]
and since
$\displaystyle{\widehat{h}(x)=\sum_{k\geq1}\frac{h_k}{h_{k-1}}x^k}$
we obtain that
\[
h(x)\star\mathcal{D}(\widehat{h})(x)=\mathcal{D}(h)(x)
\]
and so we have the announced equality.
\end{proof}

The previous result convert to the following formulas in the
important class of Sheffer sequences.
\begin{ejem} \textbf{The recurrence relation for the Sheffer polynomials. }
Since $\displaystyle{h(x)=e^x=\sum_{n\geq0}\frac{x^n}{n!}}$ and
$\displaystyle{\widehat{h}(x)=\sum_{n\geq1}\frac{x^n}{n}=-\log(1-x)}$,
the recurrence relation is:
\[
S_n(x)=\frac{1}{g_0}(xS_{n-1}(x)\star
(-\log(1-x)))-\frac{g_1}{g_0}S_{n-1}(x)-\cdots-\frac{g_n}{g_0}S_0(x)+
\frac{f_n}{g_0}\ \quad \forall n\in \N \ \quad \text{with} \ \quad
S_0(x)=\frac{f_0}{g_0}
\]

and the recurrence relations for the coefficients are

\noindent If $k\geq1$
\[
S_{n,k}=-\frac{g_1}{g_0}S_{n-1,k}-\cdots-\frac{g_n}{g_0}S_{0,k}+\frac{1}{k}S_{n-1,k-1}
\]
If $k=0$
\[
S_{n,0}=-\frac{g_1}{g_0}S_{n-1,0}-\cdots-\frac{g_n}{g_0}S_{0,0}+\frac{f_n}{g_0},\qquad
S_{0,0}=\frac{f_0}{g_0}
\]

And for its derivatives. Since
\[
(xS_{n-1}(x)\star(-\log(1-x)))'=S_{n-1}(x)\star\frac{1}{1-x}=S_{n-1}(x)
\]
Then
\[
S'_n(x)=\frac{1}{g_0}S_{n-1}(x)-\frac{g_1}{g_0}S'_{n-1}(x)-\cdots-\frac{g_n}{g_0}S'_0(x)
\]
So
\[
S_{n-1}(x)=\sum_{k=0}^ng_kS'_{n-k}(x)
\]
\end{ejem}

In some cases the above formulas allow us to compute easily some
generalized Appell sequences in terms of the associated sequences
of Riordan type.


\begin{ejem} \textbf{Some easy computations related to the geometric series.}
Let $(p_n(x))$ be a polynomial sequence of Riordan type. Then
\begin{itemize}
    \item [(i)] \[
    p_n^{\frac{1}{(1-x)^2}}(x)=xp'_n(x)+p_n(x)=(xp_n(x))'\ \qquad
    \forall n\geq0
    \]
    The proof of the above equality is the following
    \[ p_n^{\frac{1}{(1-x)^2}}(x)=p_n(x)\star\frac{1}{(1-x)^2}=
p_n(x)\star\left(\frac{1}{(1-x)}\right)'=
\left(xp_n(x)\star\frac{1}{(1-x)}\right)'=(xp_n(x))'
\]
    \item [(ii)] If $a\neq0$ then
    \[
    p_n^{a-\log(1-x)}(x)=ap_n(0)+\int_0^x\frac{p_n(t)-p_n(0)}{t} \ \qquad
    \forall n\geq0
    \]
    The proof of the last equality is
    \[
p_n^{a-\log(1-x)}(x)=p_n(x)\star(a-\log(1-x))
\]
So $\displaystyle{p_n^{a-\log(1-x)}(0)=ap_n(0)}$. The derivative
in the right part of the equality is
\[
\frac{p_n(x)-p_n(0)}{x}\star\frac{1}{(1-x)}=\frac{p_n(x)-p_n(0)}{x}
\]
Consequently
\[
   p_n^{a-\log(1-x)}(x)=ap_n(0)+\int_0^x\frac{p_n(t)-p_n(0)}{t}
\]
\end{itemize}
\end{ejem}


The following examples are particular cases of Sheffer polynomials
which can be easily described with a different representation as
generalized Appell polynomial. In fact any Sheffer sequence can be
obtained as a Hadamard $h$-weighted sequences polynomials for some
$h(x)\neq e^{x}$. We choose, in particular, Laguerre sequence
because it is very close to the Pascal triangle.

\begin{ejem} \textbf{The Laguerre polynomials.} We consider
\[
T(-1\mid
x-1)(e^{tx})=T(1\mid1-x)T(-1\mid-1)(e^{tx})=T(1\mid1-x)(e^{-tx})=\sum_{k=0}^nL_n(t)x^n
\]
where $L_n(x)$ are the Laguerre polynomials. Note that $T(1\mid
1-x)$ is the Pascal triangle:
\[
T(1\mid1-x)(e^{-tx})=\left(%
\begin{array}{cccccc}
  1 & 0 & 0 & 0 & 0 & \cdots\\
  1 & 1 & 0 & 0 & 0 & \cdots\\
  1 & 2 & 1 & 0 & 0 & \cdots\\
  1 & 3 & 3 & 1 & 0 & \cdots\\
  1 & 4 & 6 & 4 & 1 & \cdots\\
  \vdots & \vdots & \vdots & \vdots & \vdots & \ddots\\
\end{array}%
\right) \left(
 \begin{array}{c}
 1\\
 -t\\
 \frac{t^2}{2}\\
 -\frac{t^3}{6}\\
 \frac{t^4}{24}\\
 \vdots
  \end{array}
 \right)=
 \left(
 \begin{array}{c}
 1\\
 1-t\\
1-2t+ \frac{1}{2}t^2\\
 1-3t+\frac{3}{2}t^2-\frac{1}{6}t^3\\
 1-4t+3t^2-\frac{2}{3}t^3+\frac{1}{24}t^4\\
 \vdots
  \end{array}
 \right)
\]
From the definition of the polynomials we obtain easily the
well-known general term:
\[
L_n(x)=p_n(x)\star
e^{-x}=\sum_{k=0}^n\binom{n}{k}x^k\star\sum_{k\geq0}\frac{(-1)^k}{k!}x^k=
\sum_{k=0}^{n}(-1)^k\frac{1}{k!}\binom{n}{k}x^k
\]

Our recurrence relation for Laguerre polynomials is:
\[
L_n(x)=xL_{n-1}(x)\star(-\log(1-x))+L_{n-1}(x)
\]

and the recurrence relations for the coefficients are

\noindent If $k\geq1$
\[
L_{n,k}=L_{n-1,k}-\frac{1}{k}L_{n-1,k-1}
\]
If $k=0$
\[
L_{n,0}=L_{n-1,0},\qquad L_{0,0}=1
\]

Using Corollary \ref{C:DerPn} we have:
\[
L'_n(x)=L'_{n-1}(x)-L_{n-1}(x)
\]
And consequently
\[
L'_n(x)=-\sum_{k=0}^{n-1}L_k(x)
\]

\end{ejem}

\begin{ejem}
\textbf{The Hermite polynomials.} We consider
\[
\sum_{n\geq0}H_n(t)x^n=T\left(\frac{1}{2e^{x^2}} \Big|
\frac{1}{2}\right)(e^{tx})=T\left(\frac{1}{e^{x^2}} \Big|
1\right)T\left(\frac{1}{2} \Big|
\frac{1}{2}\right)(e^{tx})=T\left(\frac{1}{e^{x^2}} \Big|
1\right)(e^{2tx})=e^{2tx-x^2}
\]
\[
\left(%
\begin{array}{cccccc}
  1 & 0 & 0 & 0 & 0 & \cdots\\
  0 & 1 & 0 & 0 & 0 & \cdots\\
  -1 & 0 & 1 & 0 & 0 & \cdots\\
  0 & -1 & 0 & 1 & 0 & \cdots\\
  \frac{1}{2} & 0 & -1 & 0 & 1 & \cdots\\
  \vdots & \vdots & \vdots & \vdots & \vdots & \ddots\\
\end{array}%
\right) \left(
 \begin{array}{c}
 1\\
 2t\\
 2t^2\\
 \frac{4t^3}{3}\\
 \frac{2t^4}{3}\\
 \vdots
  \end{array}
 \right)=
 \left(
 \begin{array}{c}
 1\\
 2t\\
 2t^2-1\\
 \frac{4}{3}t^3-2t\\
 \frac{2}{3}t^4-2t^2+\frac{1}{2}\\
 \vdots
  \end{array}
 \right)
\]
If $\widetilde{H}_n(x)=n!H_n(x)$, we obtain $\widetilde{H}_n(x)$
are the usual Hermite polynomials:
\[
\begin{array}{c}
  \widetilde{H}_0(x)=1 \\
  \widetilde{H}_1(x)=2x \\
  \widetilde{H}_2(x)=4x^2-2 \\
  \widetilde{H}_3(x)=8x^3-12x \\
  \widetilde{H}_4(x)=16x^4-48x^2+12 \\
\end{array}
\]
Since \[\sum_{n\geq0}H_n(t)x^n=T\left(\frac{1}{e^{x^2}}\Big|
1\right)(e^{2tx})\]

The recurrence for the $(H_n(x))$ is:
\[
H_n(x)=xH_{n-1}(x)\star \widehat{h}(x)+f_n
\]
where
\[
\widehat{h}(x)=\sum_{n\geq1}\frac{2}{n}x^n=-2\log(1-x)\qquad \text{and}\qquad f_n=\left\{%
\begin{array}{ll}
    0, & \hbox{if $n$ is odd;} \\
    \frac{(-1)^{\frac{n}{2}}}{\left(\frac{n}{2}\right)!}, & \hbox{if $n$ is even.} \\
\end{array}%
\right.
\]

and the recurrence relations for the coefficients are

\noindent If $k\geq1$
\[
H_{n,k}=\frac{2}{k}H_{n-1,k-1}
\]
If $k=0$
\[
H_{n,0}=f_n,\qquad H_{0,0}=1
\]

Using Corollary \ref{C:DerPn} we obtain
\[
H'_n(x)=2H_{n-1}(x)
\]
or equivalently, the known relation for the $\widetilde{H}_n(x)$,
\[
\widetilde{H}'_n(x)=2n\widetilde{H}_{n-1}(x)
\]

We can also  obtain the general term for the Hermite polynomials:

\[
H_{2m}(x)=\sum_{j=0}^m\frac{(-1)^{m-j}2^{2j}}{(m-j)!(2j)!}x^{2j}
\]
\[
H_{2m+1}(x)=\sum_{j=0}^m\frac{(-1)^{m-j}2^{2j+1}}{(m-j)!(2j+1)!}x^{2j+1}
\]
 From here the known equality $\widetilde{H}_n(-x)=(-1)^n\widetilde{H}_n(x)$
 is obvious.
\end{ejem}


\
\

\

Now we are going to translate the operations in the Riordan group
to the set of Hadamard $h$-weighted families of polynomials.

Suppose that $(p_n(x))$ is the associated sequences of polynomials
to the element of the Riordan group $T(f\mid g)$
If  $\displaystyle{p_n(x)=\sum_{k=0}^np_{n,k}x^k}$, $T(f\mid
g)=(p_{n,k})_{n,k\in\N}$. Let
$\displaystyle{h(x)=\sum_{n\geq0}h_nx^n}$ be such that $h_n\neq0$
$\forall n\in\N$. So, $h$ admits a reciprocal for the Hadamard
product, we represent it by $\displaystyle{h^{(-1)_{\star}}}$. In
fact
$\displaystyle{h^{(-1)_{\star}}(x)=\sum_{n\geq0}\frac{1}{h_n}x^n}$.

Consider the set
\[
\mathcal{R}_h=\{(p_n^h(x))_{n\in\N}\ / \
(p_n(x))_{n\in\N}\in\mathcal{R}\}
\]

the following result is very easy to prove:
\begin{prop}
The function
\[
\begin{array}{cccc}
  H_h: & \mathcal{R} & \longrightarrow &  \mathcal{R}_h\\
   & (p_n(x))_{n\in\N} & \longmapsto & (p_n^h(x))_{n\in\N}
\end{array}
\]
is bijective if $h$ is a Hadamard unit in $\K[[x]]$. Consequently
the umbral composition $\sharp$ defined in $\mathcal{R}$ is
transformed into an operation $\sharp_h$ converting so
$(\mathcal{R}_h,\sharp_h)$ into a group and $H_h$ converts into a
group isomorphism. Moreover if $(s_n(x))_{n\in\N} , \
(t_n(x))_{n\in\N}\in\mathcal{R}_h$ with
$\displaystyle{s_n(x)=\sum_{k=0}^ns_{n,k}x^k}$,
$\displaystyle{t_n(x)=\sum_{k=0}^nt_{n,k}x^k\ \in\
\mathcal{R}_h}$,
$(r_n(x))_{n\in\N}=(s_n(x))_{n\in\N}\sharp_h(t_n(x))_{n\in\N}$
with $\displaystyle{r_n(x)=\sum_{k=0}^nr_{n,k}x^k}$ then
\[
r_{n,j}=\sum_{k=j}^n\frac{1}{h_k}s_{n,k}t_{k,j}
\]
\end{prop}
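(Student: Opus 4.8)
The plan is to treat the three assertions in turn, the first two being formal and the third a short coefficient computation. First I would verify that $H_h$ is a bijection by producing its inverse explicitly. Since $h_n\neq0$ for all $n$, the series $h$ has a Hadamard reciprocal $h^{(-1)_{\star}}=\sum_{n\geq0}\frac{1}{h_n}x^n$, and I claim the inverse of $H_h$ is the map $(s_n(x))_{n\in\N}\mapsto(s_n(x)\star h^{(-1)_{\star}})_{n\in\N}$. Surjectivity of $H_h$ is immediate from the very definition of $\mathcal{R}_h$ as the image of $\mathcal{R}$. For injectivity, if $p_n^h=q_n^h$ for all $n$, then comparing coefficients gives $p_{n,k}h_k=q_{n,k}h_k$, and since $h_k\neq0$ this forces $p_{n,k}=q_{n,k}$, whence $(p_n)=(q_n)$; the displayed map undoes the weighting coefficientwise, confirming it is the two-sided inverse.

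Second, I would obtain the group structure on $\mathcal{R}_h$ by transport of structure. Having a bijection $H_h\colon(\mathcal{R},\sharp)\to\mathcal{R}_h$ and knowing from the earlier theorem that $(\mathcal{R},\sharp)$ is a group isomorphic to the Riordan group, I define
\[
(s_n(x))_{n\in\N}\sharp_h(t_n(x))_{n\in\N}:=H_h\Big(H_h^{-1}\big((s_n(x))\big)\,\sharp\,H_h^{-1}\big((t_n(x))\big)\Big).
\]
With this definition the group axioms for $\sharp_h$ are inherited from those of $\sharp$, and $H_h$ becomes a group isomorphism automatically; no further checking is needed, since conjugating a group law by a bijection always yields a group law making the bijection an isomorphism.

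Finally, for the coefficient formula I would unwind the definition of $\sharp_h$. Writing $(p_n(x))=H_h^{-1}((s_n(x)))$ and $(q_n(x))=H_h^{-1}((t_n(x)))$, the coefficientwise description of $H_h^{-1}$ gives $p_{n,k}=s_{n,k}/h_k$ and $q_{k,j}=t_{k,j}/h_j$. The umbral composition formula for $\sharp$ already established in this section yields the intermediate sequence $(\rho_n(x))=(p_n(x))\sharp(q_n(x))$ with $\rho_{n,j}=\sum_{k=j}^n p_{n,k}q_{k,j}$. Applying $H_h$ multiplies the $j$-th coefficient by $h_j$, so $r_{n,j}=h_j\rho_{n,j}$; substituting the expressions for $p_{n,k}$ and $q_{k,j}$ gives
\[
r_{n,j}=h_j\sum_{k=j}^n\frac{s_{n,k}}{h_k}\cdot\frac{t_{k,j}}{h_j}=\sum_{k=j}^n\frac{1}{h_k}s_{n,k}t_{k,j},
\]
the factor $h_j$ cancelling against the $1/h_j$ coming from $q_{k,j}$. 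The only point demanding care is this index bookkeeping and the cancellation of $h_j$; there is no genuine obstacle, and as a sanity check the case $h=\frac{1}{1-x}$ (all $h_n=1$) recovers $\mathcal{R}_h=\mathcal{R}$ together with the original umbral composition formula $r_{n,j}=\sum_{k=j}^n s_{n,k}t_{k,j}$.
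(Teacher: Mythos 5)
Your proposal is correct and follows essentially the same route as the paper: exhibit the Hadamard-reciprocal map $(s_n(x))_{n\in\N}\mapsto(s_n(x)\star h^{(-1)_{\star}})_{n\in\N}$ as the explicit inverse of $H_h$, define $\sharp_h$ by transporting $\sharp$ through $H_h$, and derive the coefficient formula by substituting $p_{n,k}=s_{n,k}/h_k$, $q_{k,j}=t_{k,j}/h_j$ into the umbral composition formula and cancelling $h_j$. Your write-up is in fact slightly more careful than the paper's (which dismisses the bijectivity claim as obvious), and the sanity check with $h=\frac{1}{1-x}$ is a nice addition.
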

\begin{proof}
The first part is obvious, because if the function
\[
\begin{array}{cccc}
  G_{h^{(-1)_{\star}}}: & \mathcal{R}_h & \longrightarrow &  \mathcal{R}\\
   & (s_n(x))_{n\in\N} & \longmapsto & (s_n(x)\star h^{(-1)_{\star}})_{n\in\N}
\end{array}
\]
is the inverse, for the composition of $H_h$.

Now given  $(s_n(x))_{n\in\N} , \
(t_n(x))_{n\in\N}\in\mathcal{R}_h$ we define
$(s_n(x))_{n\in\N}\sharp_h(t_n(x))_{n\in\N}=(r_n(x))_{n\in\N}$
where $r_n(x)=H_h(p_n(x)\sharp q_n(x))$ where $s_n(x)=p_n^h(x)$,
$t_n(x)=q_n^h(x)$ for every $n\in\N$. If
$\displaystyle{p_n(x)=\sum_{k=0}^np_{n,k}x^k}$ and
$\displaystyle{q_n(x)=\sum_{k=0}^nq_{n,k}x^k}$ then if
$(p_n(x))\sharp (q_n(x))=(u_n(x))$ with
$\displaystyle{u_n(x)=\sum_{k=0}^nu_{n,k}x^k}$ then
$\displaystyle{u_{n,j}=\sum_{k=j}^np_{n,k}q_{k,j}}$. Consequently
$r_{n,j}=u_{n,j}h_j$ then
\[
r_{n,j}=\sum_{k=j}^n\frac{p_{n,k}h_kq_{k,j}h_j}{h_k}=\sum_{k=j}^n\frac{s_{n,k}t_{k,j}}{h_k}
\]
\end{proof}

Another important kind of polynomial sequences in the literature
are the sequences of binomial type \cite{Rota2} or the closely
related sequences, of convolution polynomials, see
\cite{KnuthConvo}. In fact $(s_n(x))_{n\in\N}$ is a convolution
polynomial if and only if $(n!s_n(x))_{n\in\N}$ is a sequence of
binomial type.

As one can deduce from \cite{KnuthConvo} a polynomial sequence
$(s_n(x))_{n\in\N}$ forms a convolution family if and only if
there is a formal power series
$\displaystyle{b(x)=\sum_{n\geq1}b_nx^n}$ with $b_1\neq0$ such
that $\displaystyle{e^{tb(x)}=\sum_{n\geq0}s_n(t)x^n}$. So the
convolution condition
\[
s_n(t+r)=\sum_{k=0}^ns_{n-k}(t)s_k(r)
\]
come directly from the fact that
\[
e^{tb(x)}e^{rb(x)}=e^{(t+r)b(x)}
\]
So, symbolically, the Cauchy product
\[
\left(\sum_{n\geq0}s_n(t)x^n\right)\left(\sum_{n\geq0}s_n(r)x^n\right)=\sum_{n\geq0}s_n(t+r)x^n
\]
is just the convolution condition.

Now suppose again a power series
$\displaystyle{g=\sum_{n\geq0}g_nx^n}$ with $g_0\neq0$. Then
\[
T(g\mid g)(e^{tx})=\sum_{n\geq0}s_n(t)x^n=e^{\frac{tx}{g}}
\]
Consequently we have:
\begin{thm}
A polynomial sequence $(s_n(x))_{n\in\N}$  is a convolution
sequence if and only if there is a sequence $(g_n)_{n\in\N}$ in
$\K$ with $g_0\neq0$ such that
\[
s_n(x)=\frac{1}{g_0}(xs_{n-1}(x)\star(-\log(1-x)))-\frac{g_1}{g_0}s_{n-1}(x)-\cdots
-\frac{g_{n-1}}{g_0}s_{1}(x)\ \quad \ \text{for } \ n\geq2
\]
and $s_0(x)=1$, $\displaystyle{s_1(x)=\frac{x}{g_0}}$.
\end{thm}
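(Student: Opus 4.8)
The plan is to identify convolution sequences with exactly those generalized Appell sequences of Sheffer type ($h(x)=e^x$) for which the two defining series coincide, $f=g$, and then to read the recurrence straight off Theorem \ref{T:Tfgh}. The bridge is the identity recorded just above the statement, $T(g\mid g)(e^{tx})=e^{tx/g}$, combined with the characterization from \cite{KnuthConvo} that $(s_n(x))_{n\in\N}$ is a convolution family if and only if $\sum_{n\geq0}s_n(t)x^n=e^{tb(x)}$ for some $b=\sum_{n\geq1}b_nx^n$ with $b_1\neq0$. So the whole proof amounts to matching $b$ with $g$ and specializing the generalized Appell recurrence.

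For the forward implication I would start from such a $b$ and set $g(x)=x/b(x)$. Since $b(x)=x(b_1+b_2x+\cdots)$ with $b_1\neq0$, this quotient is a genuine power series $g=\sum_{n\geq0}g_nx^n$ with $g_0=1/b_1\neq0$, and by construction $x/g(x)=b(x)$, hence $e^{tb(x)}=e^{tx/g}=T(g\mid g)(e^{tx})$. Thus $(s_n(x))$ is the generalized Appell sequence attached to $f=g$, $h=e^x$. Feeding $f_n=g_n$ and $h(x)=e^x$ (so $h_0=1$ and $\widehat{h}(x)=-\log(1-x)$) into Theorem \ref{T:Tfgh} yields
\[
s_n(x)=\frac{1}{g_0}(xs_{n-1}(x)\star(-\log(1-x)))-\frac{g_1}{g_0}s_{n-1}(x)-\cdots-\frac{g_n}{g_0}s_0(x)+\frac{g_n}{g_0},
\]
with $s_0(x)=g_0/g_0=1$. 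The key simplification is that the last two summands combine to $-\frac{g_n}{g_0}s_0(x)+\frac{g_n}{g_0}=0$ precisely because $s_0(x)=1$; this deletes the $s_0$ term and produces exactly the stated recurrence for $n\geq2$, while the same cancellation at $n=1$ leaves $s_1(x)=\frac{1}{g_0}(xs_0(x)\star(-\log(1-x)))=x/g_0$.

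For the converse I would run this backwards. Given $(g_n)$ with $g_0\neq0$ and a sequence $(s_n(x))$ obeying the displayed recurrence with $s_0(x)=1$ and $s_1(x)=x/g_0$, set $f=g=\sum_{n\geq0}g_nx^n$. Because $s_0(x)=1$ and $f_n=g_n$, adding the identically zero quantity $-\frac{g_n}{g_0}s_0(x)+\frac{f_n}{g_0}$ rewrites the hypothesis as the full generalized Appell recurrence of Theorem \ref{T:Tfgh} for the data $f=g$, $h=e^x$; since $f_0=g_0\neq0$ and $h_n=1/n!\neq0$ for all $n$, that theorem certifies $(s_n(x))$ as a generalized Appell sequence with generating function $\frac{f}{g}e^{tx/g}=e^{tx/g}$. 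Writing $b(x)=x/g(x)\in x\K[[x]]$ with $b_1=1/g_0\neq0$ gives $\sum_{n\geq0}s_n(t)x^n=e^{tb(x)}$, so $(s_n(x))$ is a convolution family by \cite{KnuthConvo}.

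I expect no serious obstacle; the only points needing care are bookkeeping. First, one must check that $g=x/b$ (equivalently $b=x/g$) is an honest invertible change between a series of order one and a series of order zero, so that $b_1\neq0\iff g_0\neq0$ and both directions of the correspondence stay inside the admissible classes. Second, one must track the cancellation $-\frac{g_n}{g_0}s_0(x)+\frac{f_n}{g_0}=0$ (valid exactly when $f=g$ and $s_0(x)=1$) that reduces the generic Sheffer recurrence to the shorter convolution one, including the $n=1$ boundary value. Everything else is a direct specialization of Theorem \ref{T:Tfgh} with $f=g$ and $h=e^x$, so no new estimates or constructions are required.
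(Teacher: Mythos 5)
Your proposal is correct and takes essentially the same route as the paper: both use Knuth's characterization together with the identity $T(g\mid g)(e^{tx})=e^{tx/g}$ to identify convolution families with the $e^x$-Hadamard-weighted sequences of $T(g\mid g)$, then specialize the paper's recurrence machinery to $f=g$, $h=e^x$ (so $\widehat{h}(x)=-\log(1-x)$) and cancel $-\frac{g_n}{g_0}s_0(x)+\frac{g_n}{g_0}$. The only cosmetic difference is that you invoke Theorem \ref{T:Tfgh} directly on $(s_n(x))$, whereas the paper applies Theorem \ref{T:recu} to the associated Riordan sequence $(q_n(x))$ of $T(g\mid g)$ and then multiplies Hadamard by $e^x$ at the end --- which is exactly how Theorem \ref{T:Tfgh} is itself proved, so the two arguments coincide.
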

\begin{proof}
With the comments above it is easily proved that a polynomial
sequence $(s_n(x))_{n\in\N}$ is a convolution family if and only
if there is a series $\sum_{n\geq0}g_nx^n$ with $g_0\neq0$ such
that
\[
T(g\mid g)(e^{tx})=\sum_{n\geq0}s_n(t)x^n
\]
So $(s_n(x))_{n\in\N}$ is the $e^x$-Hadamard weighted sequence
generated by the Riordan sequence $(q_n(x))_{n\in\N}$ associated,
as in Theorem \ref{T:recu}, to the Riordan array $T(g\mid g)$.
Consequently $\displaystyle{q_0(x)=\frac{g_0}{g_0}=1}$
\[
q_n(x)=\left(\frac{x-g_1}{g_0}\right)q_{n-1}(x)-\frac{g_2}{g_0}q_{n-2}(x)\cdots-\frac{g_{n-1}}{g_0}q_1(x)-\frac{g_n}{g_0}q_0(x)+\frac{g_n}{g_0}
\]
so $\displaystyle{q_1(x)=\frac{x}{g_0}}$ and
\[
q_n(x)=\left(\frac{x-g_1}{g_0}\right)q_{n-1}(x)-\frac{g_2}{g_0}q_{n-2}(x)\cdots-\frac{g_{n-1}}{g_0}q_1(x)
\ \text{for} \ n\geq2
\]
The result follows directly multiplying Hadamard by $e^x$.
\end{proof}


As we know, \cite{Rota2}, the polynomial sequences of binomial
types are closely related to the so called delta-operator, see
\cite{Rota2}. In \cite{Rog78}, \cite{Spr94}, \cite{Merlini} it was
introduced the so called $A$-sequence associated to a Riordan
array. In our notation the $A$-sequence associated to the Riordan
array $T(f\mid g)$ is just the unique power series
$\displaystyle{A=\sum_{n\geq0}a_nx^n}$ with $a_0\neq0$ such that
$\displaystyle{A(\frac{x}{g})=\frac{1}{g}}$. As a consequence the
results in \cite{2ways} we get that $A$ is the $A$-sequence of
$T(g\mid g)$ if and only if $T(A\mid A)=T^{-1}(g\mid g)$ where the
inverse operation is taking in the Riordan group. So $A$ is the
$A$-sequence of $T(g\mid g)$ if and only if $g$ is the
$A$-sequence of $T(A\mid A)$. Let us denote by $\mathcal{D}$ to
the derivative operator on polynomials. Using Theorem 1 and
Corollary 3 in \cite{Rota2} we have
\begin{thm}
Suppose that $(s_n(x))_{n\in\N}$  is the convolution sequences
associated to the Riordan array $T(g\mid g)$. Consider the
corresponding sequence $(r_n(x))_{n\in\N}$ of binomial type, i.e.
$r_n(x)=n!s_n(x)$. Then the delta-operator $Q$ having
$(r_n(x))_{n\in\N}$  as its basic sequences is just
$\displaystyle{\frac{x}{A(x)}(\mathcal{D})}$ where $A$ is the
$A$-sequence of $T(g\mid g)$. On the opposite, if we have the
delta-operator $\displaystyle{\frac{x}{g(x)}(\mathcal{D})}$ and
$(r_n(x))_{n\in\N}$ is the basis sequence then
$(\frac{r_n(x)}{n!})_{n\in\N}$ is the convolution sequence
associated to the Riordan array $T(A \mid A)$ where $A$ is the
$A$-sequence of $T(g\mid g)$.
\end{thm}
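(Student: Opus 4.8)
The plan is to reduce the whole statement to the single classical fact of Rota's umbral theory invoked as Theorem 1 and Corollary 3 of \cite{Rota2}: if $Q=q(\mathcal{D})$ is a delta-operator, written through its indicator series $q$ with $q(0)=0$ and $q'(0)\neq0$, then its basic sequence $(r_n(x))_{n\in\N}$ of binomial type has exponential generating function
\[
\sum_{n\geq0}\frac{r_n(t)}{n!}x^n=e^{t\,q^{-1}(x)},
\]
where $q^{-1}$ is the compositional inverse of $q$. Everything will then follow by matching generating functions, once the relevant compositional inverse is expressed through the $A$-sequence.

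The key computational step, and essentially the only one requiring work, is the inversion lemma: \emph{the compositional inverse of $\displaystyle{\frac{x}{g(x)}}$ is $\displaystyle{\frac{x}{A(x)}}$, where $A$ is the $A$-sequence of $T(g\mid g)$.} I would prove this directly from the defining relation $\displaystyle{A\left(\frac{x}{g}\right)=\frac{1}{g}}$. Writing $\displaystyle{y=\frac{x}{g(x)}}$ gives $\displaystyle{A(y)=\frac{1}{g(x)}}$, hence $\displaystyle{g(x)=\frac{1}{A(y)}}$ and therefore $\displaystyle{x=y\,g(x)=\frac{y}{A(y)}}$, which exhibits $\displaystyle{y\mapsto\frac{y}{A(y)}}$ as the inverse of $\displaystyle{x\mapsto\frac{x}{g(x)}}$. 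By the symmetry $T(A\mid A)=T^{-1}(g\mid g)$ recalled above (so that $g$ is in turn the $A$-sequence of $T(A\mid A)$), the same lemma holds with the roles of $g$ and $A$ interchanged.

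With this in hand the first assertion is immediate from the generating function established just before, $\displaystyle{T(g\mid g)(e^{tx})=\sum_{n\geq0}s_n(t)x^n=e^{\frac{tx}{g}}}$. Since $r_n(x)=n!\,s_n(x)$, the binomial-type sequence satisfies $\displaystyle{\sum_{n\geq0}\frac{r_n(t)}{n!}x^n=e^{\frac{tx}{g}}}$, so in the notation above $q^{-1}(x)=\frac{x}{g(x)}$. The inversion lemma then gives $\displaystyle{q(x)=\frac{x}{A(x)}}$, whence the delta-operator whose basic sequence is $(r_n(x))$ is exactly $\displaystyle{Q=q(\mathcal{D})=\frac{x}{A(x)}(\mathcal{D})}$.

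For the converse I would run the identification backwards. Starting from $\displaystyle{Q=\frac{x}{g(x)}(\mathcal{D})}$, its indicator is $\displaystyle{q(x)=\frac{x}{g(x)}}$, so the cited correspondence together with the inversion lemma yields $\displaystyle{\sum_{n\geq0}\frac{r_n(t)}{n!}x^n=e^{t\,q^{-1}(x)}=e^{t\frac{x}{A(x)}}}$. On the other hand, applying the formula $T(g\mid g)(e^{tx})=e^{\frac{tx}{g}}$ with $g$ replaced by $A$ gives $\displaystyle{T(A\mid A)(e^{tx})=e^{t\frac{x}{A}}=\sum_{n\geq0}\sigma_n(t)x^n}$, where $(\sigma_n(x))$ is the convolution sequence associated to $T(A\mid A)$. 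Comparing the two generating functions coefficient by coefficient gives $\displaystyle{\frac{r_n(x)}{n!}=\sigma_n(x)}$ for every $n$, which is the second assertion. The main obstacle here is purely notational: keeping straight the $n!$ normalizations that pass between the convolution picture, the binomial-type picture, and the exponential generating function; the inversion lemma itself is a one-line manipulation of the $A$-sequence equation.
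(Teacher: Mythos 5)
Your proposal is correct and follows essentially the same route the paper intends: the paper states this theorem without a written proof, presenting it as a direct assembly of Rota's Theorem 1 and Corollary 3, the generating function $T(g\mid g)(e^{tx})=e^{tx/g}$ from the preceding convolution theorem, and the $A$-sequence facts recalled just before the statement. Your only (minor) variation is that you prove the key inversion lemma --- that $x/A(x)$ is the compositional inverse of $x/g(x)$ --- directly from the defining equation $A\left(\frac{x}{g}\right)=\frac{1}{g}$, whereas the paper encodes this same fact in the cited group-theoretic identity $T(A\mid A)=T^{-1}(g\mid g)$ from its reference on iterative processes; the two formulations are equivalent, and your explicit one-line manipulation makes the omitted argument self-contained.
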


We would like to say that in \cite{2ways} it is described a
recurrence process, related to Banach Fixed Point Theorem and to
the Lagrange inversion formula, to get
$\displaystyle{\frac{x}{A}}$ using only the series $g$.

%
%
%
%
%

Now we are going to give a characterization of a generalized
Appell sequence using linear transformations in the $\K$-linear
space $\K[x]$.

Usually a Riordan matrix is defined by means of the natural linear
action on $\K[[x]]$, in fact, a matrix $A=(a_{n,k})$ is a Riordan
matrix $T(f\mid g)$ if and only if the action of $A$ on any power
series $\alpha$ is given by $\displaystyle{T(f\mid
g)=\frac{f}{g}\alpha\left(\frac{x}{g}\right)}$. In these terms we
have
\begin{prop}
A matrix $s=(s_{n,k})$ has as associated sequence of polynomials a
generalized Appell sequence if and only if there are three power
series $\displaystyle{f=\sum_{n\geq0}f_nx^n}$,
$\displaystyle{g=\sum_{n\geq0}g_nx^n}$,
$\displaystyle{h=\sum_{n\geq0}h_nx^n}$, with $f_0,g_0\neq0$ and
$h_n\neq0, \ \forall \ n\in\N$ such that the natural linear action
induced by $s$ is given by
$s(\alpha)=\displaystyle{\frac{f(x)}{g(x)}(h\star\alpha)\left(\frac{x}{g}\right)}$
for any $\alpha\in\K[[x]]$.
\end{prop}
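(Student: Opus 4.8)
The plan is to reduce everything to the earlier proposition that characterizes generalized Appell sequences through the identity $s_n(x)=p_n^h(x)$, by recognizing that the natural linear action of the matrix $s$ decomposes as a Hadamard multiplication by $h$ followed by the Riordan action of $T(f\mid g)$. The guiding algebraic fact is that Hadamard multiplication by $h=\sum_{n\geq0}h_nx^n$ is exactly the diagonal operator $\mathrm{diag}(h_0,h_1,\dots)$ on coefficient vectors, so that the matrix factorization $s=T(f\mid g)\,\mathrm{diag}(h_0,h_1,\dots)$ is just a rephrasing of the operator identity $s(\alpha)=T(f\mid g)(h\star\alpha)$.

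First I would treat the direct implication. Assuming the associated sequence $(s_n(x))$ is a generalized Appell sequence, the earlier proposition furnishes $f,g,h$ with $f_0,g_0\neq0$ and $h_n\neq0$ for all $n$ such that $s_n(x)=p_n^h(x)=p_n(x)\star h(x)$, where $(p_n(x))$ is the sequence associated to $T(f\mid g)=(p_{n,k})$. Reading off coefficients this says $s_{n,k}=h_kp_{n,k}$. For any $\alpha=\sum_k\alpha_kx^k$ the $n$-th coefficient of $s(\alpha)$ is $\sum_k s_{n,k}\alpha_k=\sum_k p_{n,k}(h_k\alpha_k)$, which is precisely the $n$-th coefficient of $T(f\mid g)(h\star\alpha)$ since $(h\star\alpha)_k=h_k\alpha_k$. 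Hence $s(\alpha)=T(f\mid g)(h\star\alpha)=\frac{f}{g}(h\star\alpha)\left(\frac{x}{g}\right)$, which is the announced formula.

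For the converse I would start from the assumption that $s(\alpha)=\frac{f}{g}(h\star\alpha)\left(\frac{x}{g}\right)$ for every $\alpha$, with $f_0,g_0\neq0$ and $h_n\neq0$. The right-hand side is exactly $T(f\mid g)(h\star\alpha)$ by the displayed description of the Riordan action. To recover the entries of $s$ I would evaluate on the monomials $\alpha=x^j$: since $h\star x^j=h_jx^j$, linearity gives $s(x^j)=h_j\,T(f\mid g)(x^j)$, and reading off the $n$-th coefficient yields $s_{n,j}=h_jp_{n,j}$, where $T(f\mid g)=(p_{n,k})$. Consequently the associated polynomials are $s_n(x)=\sum_{k=0}^n h_kp_{n,k}x^k=(p_n\star h)(x)=p_n^h(x)$. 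Since $f_0,g_0\neq0$ guarantees that $(p_n(x))$ is the associated sequence of a genuine Riordan array, and $h_n\neq0$ for all $n$, the earlier proposition applies and shows that $(s_n(x))=(p_n^h(x))$ is a generalized Appell sequence.

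I do not expect a serious obstacle: the entire content is the observation that the factor $h$ enters as a diagonal operator, so that the operator identity and the entrywise identity $s_{n,k}=h_kp_{n,k}$ are equivalent. The only point that demands a little care is making the passage between the matrix and its action rigorous, namely checking that evaluating the action on the basis $\{x^j\}$ recovers the columns of $s$ (legitimate because $s$ is lower triangular, so every output coefficient is a finite sum), and noting that $s_{n,n}=h_np_{n,n}\neq0$, so the associated family is genuinely the row-polynomial sequence of $s$.
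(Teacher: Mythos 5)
Your proof is correct and follows exactly the route the paper intends: the paper states this proposition without proof, presenting it as an immediate consequence of the earlier proposition (which characterizes generalized Appell sequences via $s_n(x)=p_n^h(x)=p_n(x)\star h(x)$ for the sequence $(p_n(x))$ associated to $T(f\mid g)$) together with the description of the Riordan action $T(f\mid g)(\beta)=\frac{f}{g}\beta\left(\frac{x}{g}\right)$. Your observation that Hadamard multiplication by $h$ acts as the diagonal matrix $\mathrm{diag}(h_0,h_1,\dots)$, so that the operator identity is equivalent to the entrywise identity $s_{n,k}=h_kp_{n,k}$, is precisely the intended content, and your care about evaluating on the monomial basis and about $s_{n,n}=h_np_{n,n}\neq0$ fills in the details the paper leaves implicit.
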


\begin{rmk}
From the above proposition we could develop the exponential
Riordan arrays or more generally the generalized Riordan matrices,
see \cite{WangWang}.
\end{rmk}

 {\bf Acknowledgment:} The first author was partially supported by
DGES grant MICINN-FIS2008-04921-C02-02. The second author was
partially supported by DGES grant MTM-2006-0825.

\end{document}